\title[]{Solutions of the Strominger System via Stable Bundles on Calabi-Yau Threefolds}
\author{Bj\"orn Andreas and Mario Garcia-Fernandez}
\address{Institut f\"ur Mathematik, Humboldt-Universit\"at zu Berlin, Rudower Chaussee 25, 12489 Berlin, Germany.}
\email{andreas\char`\@math.hu-berlin.de}
\address{Centre for Quantum Geometry of Moduli Spaces, Aarhus University, Ny Munkegade 118, bldg. 1530, DK-8000 Aarhus C, Denmark.}
\email{mariogf\char`\@qgm.au.dk}
\date{}
\jot \setlength{\topmargin}{0.1\topmargin}
\newtheorem{thm}{Theorem}[section]
\newtheorem{prop}[thm]{Proposition}
\newtheorem{lemma}[thm]{Lemma}
\theoremstyle{definition}
\newtheorem{thm*}[thm]{Theorem$^*$}
\DeclareMathOperator{\tr}{tr}
\def\dbar{\bar\partial}
\begin{document}

\begin{abstract}
We prove that a given Calabi-Yau threefold with a stable holomorphic vector bundle
can be perturbed to a solution of the Strominger system provided that the second Chern
class of the vector bundle is equal to the second Chern class of the tangent bundle.
If the Calabi-Yau threefold has strict $SU(3)$ holonomy then the equations of motion derived from the
heterotic string effective action are also satisfied by the solutions we obtain.

\end{abstract}
\maketitle
\section{Introduction}\label{sec:intro}
In a series of papers Fu, Li, Tseng and Yau \cite{Yau1, FuYau, FTY, LiYau} have provided first examples of solutions
of a system of coupled non-linear differential equations which arise as consistency conditions
in heterotic string compactifications. The system of equations is called the Strominger system and has been originally proposed by Strominger in \cite{Strom}. From a mathematical perspective, it can be considered as a generalization of the K\"ahler Ricci-flat equation for the case of non-K\"ahler Calabi-Yau manifolds~\cite{Yau1}. Moreover, the Strominger system is expected \cite{Yau1, Adams} to play an important role in investigating the geometry of Calabi-Yau threefolds within the Reid conjecture
\cite{Reid} which relates all moduli spaces of smooth Calabi-Yau threefolds by certain
birational transformations.

To solve the Strominger system one has to specify a conformally balanced Hermitian form
on a compact complex three-dimensional manifold $X$, a nowhere vanishing holomorphic $(3,0)$
form and a Hermitian-Yang-Mills connection on a bundle $E$ over this manifold. The consistency
of the underlying physical theory imposes a constraint on the associated curvature forms of
the connection on the bundle $E$ and of a unitary connection on the tangent bundle of $X$. More precisely, the curvature
forms have to satisfy the anomaly equation, a non-linear partial differential equation. The necessary condition
for the existence of solutions of this equation is that the second Chern classes of $E$ and $X$ agree.
However, although this topological constraint can be solved within the framework of algebraic geometry by finding suitable stable bundles, the problem which remains is to prove that solutions of the Strominger system actually exist.

One difficulty in obtaining smooth solutions of the Strominger system lies in the fact that many theorems
of K\"ahler geometry and thus methods of algebraic geometry do not apply. Therefore one approach to
obtain solutions is to simultaneously perturb K\"ahler and Hermitian-Yang-Mills metrics and so avoid the direct construction of non-K\"ahler manifolds with stable bundles. This approach has been used in \cite{LiYau} where it is shown that a deformation of the holomorphic structure on the direct sum of the tangent bundle and the trivial bundle and of the K\"ahler form of a given Calabi-Yau threefold leads to a smooth solution of the Strominger system whereas the original Calabi-Yau space is perturbed to a non-K\"ahler space. One question which motivated the present paper was, whether this method can be used also for any stable bundle (which satisfy the topological constraint imposed by the anomaly equation). A result along this lines has been
originally conjectured in a slightly different framework by Witten \cite{Wi86} who gave evidence for this in \cite{WiWi87,WuWi87}.

In this paper we will obtain solutions of the Strominger system using a perturbative method which has been inspired by the method developed in \cite{LiYau}. Our starting point will be a solution of the Strominger system in the large radius limit, given by any stable vector bundle on a Calabi-Yau threefold which satisfies the second Chern class constraint. This solution is then perturbed to a solution of the system using the implicit function theorem. Moreover, if the initial Calabi-Yau threefold has
strict $SU(3)$ holonomy, the obtained solutions satisfy also the equations of motion derived from the effective action of the heterotic string. For this we use a result of \cite{FIVU} and extend the original Strominger system by an instanton condition. For the proof of our main result (see Theorem~\ref{thm:existence0}) we rely on a previous result in~\cite{LiYau}. Previous examples of simultaneous solutions of the Strominger system and the equations of motion on compact nilmanifolds have been obtained in \cite{FIVU}.
\vskip .5cm
This paper is organized as follows. In Section 2 we briefly review for convenience various aspects of the Strominger system and refer for more details to \cite{Strom, CHSW}. In Section 3 we explain the main result of the paper and the method we use for its proof. In Section 4 we prove Theorem~\ref{thm:existence0}. In Section 5 we give some examples which satisfy the second Chern class constraint and so the hypothesis of Theorem~\ref{thm:existence0}. Finally, in Section 6 we discuss some further directions for research.
\vskip .5cm
{\bf Acknowledgments.} We thank A. Prat-Waldron, L. \'Alvarez-C\'onsul, G. Curio, D. Hern\'andez-Ruip\'erez, E. Witten and S.-T. Yau for discussion. We thank the SFB 647 and the Humboldt University Berlin for financial support and the Free University Berlin for hospitality. M.G.F. thanks the Max-Planck-Institute for Mathematics for financial support where part of this work was carried out.

\section{Preliminaries}
In \cite{Strom} Strominger proposed to specify a ten-dimensional space-time that is a (warped) product of a maximal symmetric four-dimensional space-time and a compact complex three-dimensional manifold $X$ in order to compactify the heterotic string. On $X$ one has to specify a Hermitian form $\omega$, a holomorphic volume form $\Omega$ which is a nowhere vanishing section of the canonical bundle $K_X={\mathcal O}_X$ and a connection $A$ on a complex vector bundle $E$ over $X$ with vanishing first Chern class $c_1(E) = 0$. To get a supersymmetric theory the gauge field $A$ has to satisfy the Hermitian-Yang-Mills (HYM) equations
\begin{equation}\label{HYM}
F^{2,0}=F^{0,2}=0, \ \ \  F\wedge \omega^2=0,
\end{equation}
where $F$ denotes the curvature two form of the connection $A$. If $\omega$ is closed, the Donaldson-Uhlenbeck-Yau Theorem~\cite{Don,UY} states that if $E$ is a $\omega$-stable holomorphic vector bundle, then there exists a connection $A$ on $E$ such that the HYM equations are satisfied. A similar result holds for arbitrary Hermitian metrics (i.e. for non closed $\omega$) as proved in \cite{LiYauHYM}.

In addition, supersymmetry requires that the Hermitian form $\omega$ and the holomorphic three form $\Omega$ have to satisfy the dilatino equation
\begin{equation}\label{dilaton}
d^*\omega=i(\dbar - \partial)\log||\Omega||_{\omega},
\end{equation}
where $\dbar$ and $\partial$ denote the Dolbeault operator on $X$ and its conjugate and $d^*$ is the adjoint of the exterior differential $d$ with respect to $\omega$.  In~\cite[Lemma~3.1]{LiYau} it is shown that this last equation is equivalent to
\begin{equation}\label{confbalanced}
d(||\Omega||_\omega \omega^2) = 0,
\end{equation}
where $||\Omega||_\omega$ denotes the point-wise norm of $\Omega$ with respect to $\omega$. Note that~\eqref{confbalanced} implies that $\omega$ is conformally balanced. Moreover, to get a consistent physical theory the Hermitian form $\omega$ and the curvature two form $F$ have to
satisfy the anomaly equation
\begin{equation}\label{bianchi}
i\partial\dbar \omega=\alpha'(\tr (R\wedge R)-\tr(F\wedge F)),
\end{equation}
where $R$ is the curvature of a $\omega$-unitary connection on the tangent bundle $TX$ and $\alpha'$ is the slope parameter in string theory. As mentioned in the introduction, the existence of a solution of \eqref{bianchi} requires that the topological condition
\begin{equation}\label{eq:Anomallycohom0}
c_2(E)=c_2(X)
\end{equation}
is satisfied. The coupled system of differential equations \eqref{HYM}-\eqref{bianchi} defines what is called the Strominger system.

As we are interested in also solving the equation of motion derived from the heterotic string effective action, we apply here a recent result obtained in~\cite{Ivan09,FIVU}. This result states that a solution of the Strominger system (the supersymmetry and anomaly equation) implies a solution of the equation of motion if and only if the connection on $TX$ is an $SU(3)$ instanton, that is, the curvature two form $R$ has to satisfy
\begin{equation}\label{Rinst}
R^{2,0}=R^{0,2}=0, \ \ \ R \wedge \omega^2=0.
\end{equation}
As pointed out in \cite{Ivan09}, equation~\eqref{Rinst} completely determines the choice of $\omega$-unitary connection in the anomaly equation~\eqref{bianchi} up to gauge transformations.

\section{Method and Main Result}
\label{sec:methodresult}

We now explain the method we will adopt in order to obtain simultaneous solutions of the Strominger system \eqref{HYM}-\eqref{bianchi}
and \eqref{Rinst}. We first note that the above system is invariant under rescaling of the Hermitian form $\omega$, except for the anomaly equation. Given a positive real constant $\lambda$, if we change $\omega\to\lambda\omega$ and define $\epsilon:= \alpha'/ \lambda$ we obtain the new system
\begin{align}\label{stromsys}
F^{2,0}=F^{0,2}=0, \ \ \  F\wedge \omega^2&=0,\\
d(||\Omega||_\omega \omega^2) & = 0,\label{eq:epsilonconfbalanced}\\
i\partial\dbar\omega - \epsilon(\tr(R\wedge R)-\tr(F\wedge F))&=0,\label{anom}\\
R^{2,0}=R^{0,2}=0, \ \ \ R\wedge \omega^2&=0,\label{eq:epsiloninstR}
\end{align}
that will be called in the following the \emph{$\epsilon$-system}. Here we use the equivalence between the dilatino equation \eqref{dilaton} and the conformally balanced condition \eqref{confbalanced}, given by \cite[Lemma~3.1]{LiYau}. Therefore, any solution of the $\epsilon$-system with $\epsilon > 0$ is related to the original system after rescaling. In the limit $\lambda\to\infty$ a solution of the $\epsilon$-system is given by a degree zero stable holomorphic vector bundle $E$ on a Calabi-Yau threefold, as we state in Lemma~\ref{lemma:L0}.
The next step is then to perturb a given solution with $\epsilon=0$ to a solution with small $\epsilon > 0$, i.e.,
with large $\lambda$. For this, we perturb the K\"ahler form of the given Calabi-Yau threefold into a conformally balanced Hermitian form on the fixed complex manifold while also perturbing its Chern connection and the unique HYM connection on the bundle $E$, whereas we preserve the HYM condition. Moreover, in order for \eqref{anom} to have a solution, the topological obstruction
\begin{equation}\label{eq:Anomallycohom}
c_2(E) = c_2(X),
\end{equation}
must be satisfied. This provides our starting point.

Let $(X,\omega_0)$ be a compact Calabi-Yau threefold with nowhere vanishing $(3,0)$-holomorphic form $\Omega$. Let $E$ be a stable holomorphic vector bundle over $X$ satisfying \eqref{eq:Anomallycohom} with rank $r$ and $c_1(E) = 0$. In this setting an unknown of the $\epsilon$-system corresponds to a triple $(h,\omega,\dbar_{_{TX}})$ where $h$ is a Hermitian metric on $E$, $\omega$ is a Hermitian form on $X$ and
$$
\dbar_{_{TX}}\colon \Omega^0(TX) \to \Omega^{0,1}(TX),
$$
is an integrable Dolbeault operator on $TX$ (regarded as a smooth complex vector bundle) satisfying $\dbar_{_{TX}}^2 = 0$. Recall that the integrability condition implies that there exist holomorphic coordinates on $TX$ with respect to $\dbar_{_{TX}}$ (see e.g.~\cite{NijWoolf}). The curvatures $R$ and $F$ correspond to the Chern connections of $\omega$ and $h$ on the holomorphic bundles $(TX,\dbar_{_{TX}})$ and $E$, respectively. In holomorphic coordinates for $\dbar_{_{TX}}$ and $E$ we can write
\begin{equation}\label{eq:curvaturesformula}
R = \dbar(h_\omega^{-1}\partial h_\omega) \qquad \textrm{and} \qquad F = \dbar(h^{-1}\partial h),
\end{equation}
where $h_\omega$ denotes the Hermitian metric on $TX$ determined by $\omega$. Note that the condition on the $(2,0)$ and $(0,2)$ part of $R$ and $F$ in~\eqref{eq:epsiloninstR} and~\eqref{stromsys} is always satisfied.

The fixed data $(E,X,\omega_0)$ provide a \emph{canonical solution} $(h_0,\omega_0,\dbar_0)$ in the limit $\lambda \to \infty$, where $h_0$ is the unique Hermite-Einstein metric on $E$, given by the Donaldson-Uhlembeck-Yau Theorem, and $\dbar_0$ is the Dolbeault operator on $TX$ determined by the holomorphic structure on $X$. Our main result is:
\begin{thm}\label{thm:existence0}
Let $E$ be a degree zero holomorphic vector bundle over a compact Calabi-Yau threefold $(X,\omega_0)$. If $c_2(E) = c_2(X)$ and $E$ is stable with respect to $[\omega_0]$, then there exists $\lambda_0 \gg 0$ and a $C^1$ curve
$$
]\lambda_0,+\infty[ \ni \lambda \to (h_\lambda,\omega_\lambda,\dbar_\lambda)
$$
of solutions of the Strominger system such that $\dbar_\lambda$ is isomorphic to $\dbar_0$ for all $\lambda$ and $(h_\lambda,\frac{\omega_\lambda}{\lambda},\dbar_\lambda)$ converges uniformly to the canonical solution $(h_0,\omega_0,\dbar_0)$ when $\lambda \to \infty$. Moreover, if $(X,\omega_0)$ has holonomy equal to $SU(3)$ then $(h_\lambda,\omega_\lambda,\dbar_\lambda)$ solves also \eqref{Rinst} and so it provides a solution of the equations of motion derived from the effective heterotic string action.
\end{thm}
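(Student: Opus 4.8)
The plan is to set up the problem as an application of the implicit function theorem in suitable Banach spaces, linearizing the $\epsilon$-system around the canonical solution $(h_0,\omega_0,\dbar_0)$ at $\epsilon = 0$. First I would formalize the space of unknowns. After fixing the holomorphic structure $\dbar_0$ on $TX$ up to isomorphism, a perturbation of $\dbar_{_{TX}}$ can be encoded by the action of the complex gauge group (or equivalently by a section of $\operatorname{End}(TX)$), so that modulo gauge the free data reduce to the Hermitian metric $h$ on $E$, the Hermitian form $\omega$ on $X$, and a deformation parameter for the Dolbeault operator. I would work with appropriate H\"older or Sobolev completions $C^{k,\alpha}$ to make the relevant operators Fredholm. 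The key reduction is that the HYM equation $F\wedge\omega^2 = 0$ together with $c_1(E)=0$ and stability pins down $h$ uniquely near $h_0$ for each fixed $\omega$ (by Donaldson--Uhlenbeck--Yau and its Hermitian analogue from \cite{LiYauHYM}), so $h$ is not a genuinely independent variable but a smooth function of $\omega$; this lets me eliminate it and concentrate on the balanced and anomaly equations.

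Next I would assemble the remaining equations into a single nonlinear map
\begin{equation}\label{eq:mapplan}
\mathcal{F}(\omega,\dbar_{_{TX}},\epsilon) = \bigl(d(\|\Omega\|_\omega\,\omega^2),\ i\partial\dbar\omega - \epsilon(\tr(R\wedge R) - \tr(F\wedge F))\bigr)
\end{equation}
between suitable Banach spaces, where $R$ and $F$ are read off from \eqref{eq:curvaturesformula}. By construction $\mathcal{F}(\omega_0,\dbar_0,0) = 0$: at $\epsilon = 0$ the anomaly equation becomes $i\partial\dbar\omega_0 = 0$, which holds because $\omega_0$ is K\"ahler, and the balanced equation holds because $\|\Omega\|_{\omega_0}$ is constant on a Calabi--Yau (so $d(\|\Omega\|_{\omega_0}\omega_0^2)$ is a multiple of $d\omega_0^2 = 0$). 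The heart of the argument is to compute the linearization $D\mathcal{F}$ in the directions $(\omega,\dbar_{_{TX}})$ at the canonical solution and show it is an isomorphism onto its target after passing to the correct slice transverse to the gauge orbits and the cohomological constraints. The cohomological constraint \eqref{eq:Anomallycohom}, namely $c_2(E) = c_2(X)$, is precisely what guarantees that the $\epsilon$-derivative $\tr(R\wedge R) - \tr(F\wedge F)$ lies in the image of $i\partial\dbar$ acting on the allowed forms, i.e.\ that the obstruction in $\partial\dbar$-cohomology vanishes; this is where the hypothesis enters decisively, since a Calabi--Yau threefold satisfies the $\partial\dbar$-lemma.

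I expect the main obstacle to be proving surjectivity (and invertibility) of the linearized operator, because the anomaly equation is genuinely second order and nonlinear in $\omega$ through the term $i\partial\dbar\omega$, and the balanced equation couples to it. The natural strategy is to show that the linearization decomposes, after Hodge-theoretic projection with respect to $\omega_0$, into an elliptic operator whose kernel and cokernel are controlled by the Dolbeault cohomology of $X$ twisted by $\operatorname{End}(TX)$ and $\operatorname{End}(E)$; here I would invoke the K\"ahler identities and the $\partial\dbar$-lemma valid on the Calabi--Yau $(X,\omega_0)$ to identify the cokernel with a cohomology group that vanishes (or is killed) precisely when $c_2(E) = c_2(X)$. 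This is exactly the point at which I would lean on the computation carried out in \cite{LiYau}, adapting their linearization and a priori estimates for the special bundle $TX\oplus\mathcal{O}$ to a general stable $E$ with the same $c_2$. Once the isomorphism is established, the implicit function theorem yields, for each small $\epsilon > 0$, a unique solution $(\omega_\epsilon,\dbar_\epsilon)$ near the canonical one, depending in a $C^1$ fashion on $\epsilon$; unwinding the rescaling $\epsilon = \alpha'/\lambda$ and $\omega \mapsto \lambda\omega$ then produces the asserted $C^1$ curve $\lambda \mapsto (h_\lambda,\omega_\lambda,\dbar_\lambda)$ with the stated convergence of $(h_\lambda,\tfrac{\omega_\lambda}{\lambda},\dbar_\lambda)$ to $(h_0,\omega_0,\dbar_0)$. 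Finally, for the statement about the equations of motion, when the holonomy of $(X,\omega_0)$ equals $SU(3)$ the Chern connection of $\omega_0$ on $TX$ is itself an $SU(3)$ instanton (its curvature $R_0$ satisfies $R_0^{2,0}=R_0^{0,2}=0$ and $R_0\wedge\omega_0^2=0$); I would verify that the instanton condition \eqref{eq:epsiloninstR} persists along the perturbation---either by imposing it as an additional equation absorbed into the gauge fixing for $\dbar_{_{TX}}$, or by checking it is automatically preserved---so that the resulting solution satisfies \eqref{Rinst} and hence, by the cited result of \cite{Ivan09,FIVU}, the full heterotic equations of motion.
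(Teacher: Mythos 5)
Your overall strategy --- rescale to the $\epsilon$-system, linearize at the canonical solution $(h_0,\omega_0,\dbar_0)$, and apply the implicit function theorem --- matches the paper's, and the first part of the statement could plausibly be carried through along your lines. One conceptual slip worth flagging: the surjectivity of the linearized operator is \emph{not} where $c_2(E)=c_2(X)$ enters (the paper's Proposition~\ref{prop:deltaL0bis} is explicitly independent of the topological condition). The hypothesis is used afterwards, to guarantee that $\tr R\wedge R-\tr F\wedge F$ is $\partial\dbar$-exact, so that a solution of the equation projected onto $\operatorname{Im}(dd^c)$ actually solves the unprojected anomaly equation. Relatedly, the operator $\textbf{T}=\delta_0\textbf{L}_2^0\oplus\delta_0\textbf{L}_3$ has kernel the $\omega_0$-harmonic $(1,1)$-forms and is not surjective onto the naive codomain, so one must both restrict the domain to a slice orthogonal to $\operatorname{Ker}\textbf{T}$ and compose with the projection onto $\operatorname{Im}\textbf{T}$; your ``slice transverse to the gauge orbits and the cohomological constraints'' gestures at the first of these but the projection on the codomain side is essential and missing. (Eliminating $h$ via Donaldson--Uhlenbeck--Yau is a legitimate alternative to the paper's choice of keeping $h$ as an unknown with the equation $\textbf{L}_1=0$, provided you verify that $\omega\mapsto h(\omega)$ is $C^1$ between the Sobolev completions, which is itself an implicit-function-theorem argument resting on $\operatorname{Ker}\Delta_A=0$.)

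The genuine gap is the ``Moreover'' part. You propose to obtain the instanton condition \eqref{Rinst} on $TX$ either by ``absorbing it into the gauge fixing'' or by ``checking it is automatically preserved''. Neither works. It is not automatic: once $\omega_\epsilon$ is no longer K\"ahler, the Chern connection of $\omega_\epsilon$ on $(TX,\dbar_0)$ has no reason to satisfy $R\wedge\omega_\epsilon^2=0$; this is precisely why the paper carries $\dbar_{_{TX}}=\dbar_a=e^{ia}\dbar_0 e^{-ia}$ as an unknown and adds $\textbf{L}_4=(h_\omega^{1/2}Rh_\omega^{-1/2})\wedge\omega^2$ to the deformation operator. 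Nor can the full equation $\textbf{L}_4=0$ be imposed outright: its linearization is governed by $\Delta_0$ on $\Omega^0_\mathbb{R}(\operatorname{ad}TX)$, whose kernel contains the covariant constant sections $i\mathbb{R}\operatorname{Id}$, so the implicit function theorem only yields $\Pi_{\Delta_0}\circ\textbf{L}_4=0$, i.e.\ $\textbf{L}_4$ lands in $\operatorname{Ker}\Delta_0\cdot\omega_0^3$. When the holonomy is exactly $SU(3)$ this residue is a single real constant, and the paper kills it by a separate integration argument: writing $\textbf{L}_4=\Pi\circ\textbf{L}_4$, multiplying by $i\|\Omega\|_{\omega_\epsilon}$, taking the trace and integrating over $X$, and using $c_1(X)=0$ together with the conformally balanced condition $d(\|\Omega\|_{\omega_\epsilon}\omega_\epsilon^2)=0$ to conclude $\int_X\operatorname{tr}iR\wedge\omega_\epsilon^2=0$, hence $\textbf{L}_4=0$. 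This mechanism --- both the inclusion of $\textbf{L}_4$ in the system and the cohomological argument removing the kernel component --- is absent from your proposal, so the claim about the equations of motion is not established. You would also need an elliptic bootstrapping step (as in Lemma~\ref{lemma:regularity}) to upgrade the $L^2_k$ solutions produced by the implicit function theorem to smooth ones.
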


The perturbative process provided by Theorem~\ref{thm:existence0} leaves the holomorphic structure of $E$ unchanged while the one on $TX$ is shifted by a complex gauge transformation and so remains isomorphic to the initial one. Since $E$ admits an irreducible solution of the Hermite-Einstein equations with respect to any element of the family of conformally balanced metrics $\omega_\lambda$, it is $\omega_\lambda$-stable for any $\lambda$. Recall that the stability condition for a holomorphic vector bundle $E$ over $X$ with respect to $\omega_\lambda$ is defined in terms of slopes of coherent subsheaves as in the K\"ahler case where the degree is computed by $c_1(E)\cdot [||\Omega||_\omega \omega^2]$ (see~\cite{LuebTel}). The same holds for $TX$, when $(X,\omega_0)$ has holonomy equal to $SU(3)$.

We now sketch the idea underlying the perturbative process in Theorem~\ref{thm:existence0}. We consider a $1$-parameter family of maps between suitable fixed (Hilbert) spaces
$$
\textbf{L}^\epsilon\colon \mathcal{V}_1 \to \mathcal{V}_2,
$$
indexed by $\epsilon \in \mathbb{R}$ which acts on triples $(h,\omega, \dbar_{_{TX}}) \in \mathcal{V}_1$ and whose zero locus corresponds to solutions of the $\epsilon$-system. 
This way, the equation $\textbf{L}^\epsilon = 0$ with $\epsilon \neq 0$ corresponding to the $\epsilon$-system is related to a simpler one at $\epsilon = 0$ which has a known canonical solution $(h_0,\omega_0, \dbar_0)$. Proving now that the differential
$$
\delta_0 \textbf{L}^0\colon \mathcal{V}_1 \to \mathcal{V}_2,
$$
at the initial solution is an isomorphism, an implicit function theorem argument shows that there is a solution of $\textbf{L}^\epsilon = 0$ nearby $(h_0,\omega_0, \dbar_0) \in \mathcal{V}_1$ in an open neighborhood of $\epsilon = 0 \in \mathbb{R}$.

To give a more precise description of the method, we introduce some notation that we use in the sequel. Let $(h_0,\omega_0, \dbar_0)$ be the canonical solution of the $\epsilon$-system at $\epsilon = 0$ associated to $E$ and $(X,\omega_0)$. Let $\mathcal{H}(E)_1$ be the space of Hermitian metrics $h$ on $E$ whose induced metric on $\Lambda^r E \cong \mathbb{C}_X$ is the constant metric $1$. We identify its elements with determinant one symmetric endomorphisms of $(E,h_0)$ via
\begin{equation}\label{eq:omegahomega}
\langle u,v\rangle_h = \langle h u,v\rangle_{h_0},
\end{equation}
for $u,v$ smooth sections on $E$. Let $\mathcal{H}(X)$ be the cone of positive definite Hermitian forms on $X$, regarded as an open subspace of $\Omega_{\mathbb{R}}^{1,1}(X)$. As in \eqref{eq:omegahomega}, given $\omega \in \mathcal{H}(X)$ we identify the corresponding Hermitian metric $h_\omega$ with a symmetric endomorphism of $(TX,h_{\omega_0})$. Let $\mathcal{D}(TX)$ be the space of integrable Dolbeault operators on $TX$, regarded as a smooth complex vector bundle. Given a real vector bundle $W$ over $X$, we denote by $\Omega^{p,q}_\mathbb{R}(W)$ and $\Omega^m_\mathbb{R}(W)$ the space of smooth real $(p,q)$-type forms and $m$-forms on $X$ with values in $W$. Let $\operatorname{ad}_0E$ and $\operatorname{ad}TX$ be the smooth real vector bundles of traceless Hermitian antisymmetric endomorphisms of $(E,h_0)$ and Hermitian antisymmetric endomorphisms of $(TX,\omega_0)$, respectively. Then, we define the operator
\begin{equation}\label{eq:Lepsilonsmooth0}
\textbf{L}^\epsilon \colon \mathcal{H}(E)_1 \times \mathcal{H}(X) \times \mathcal{D}(TX) \to \Omega^6_\mathbb{R}(\operatorname{ad}_0E)\oplus\Omega_{\mathbb{R}}^{2,2}(X)\oplus\Omega_{\mathbb{R}}^1(X)\oplus \Omega^6_\mathbb{R}(\operatorname{ad}TX),
\end{equation}
as the direct sum $\textbf{L}^\epsilon = \textbf{L}_1\oplus \textbf{L}_2^\epsilon\oplus\textbf{L}_3\oplus\textbf{L}_4$, where
\begin{equation}\label{eq:Lepsilonsplit}
\begin{split}
\textbf{L}_1(h,\omega) & = (h^{1/2} \cdot F \cdot h^{-1/2})\wedge \omega^2  \qquad \qquad \;\;\;\;\;\in  \Omega^6_\mathbb{R}(\operatorname{ad}_0E),\\
\textbf{L}_2^\epsilon(h,\omega,\dbar_{_{TX}}) & = i\partial\dbar \omega - \epsilon(\tr R\wedge R -\tr F\wedge F)  \qquad \in \Omega_{\mathbb{R}}^{2,2}(X), \\
\textbf{L}_3(\omega) & = *d(||\Omega||_\omega\omega^2) \qquad \qquad \qquad \qquad \;\;\;\;\; \in \Omega_{\mathbb{R}}^1(X),\\
\textbf{L}_4(\omega,\dbar_{_{TX}}) & = (h_\omega^{1/2}\cdot R \cdot h_\omega^{-1/2})\wedge\omega^2 \qquad \qquad \;\;\;\;\; \in \Omega^6_\mathbb{R}(\operatorname{ad}TX).
\end{split}
\end{equation}
Here $*$ is the Hodge star operator of $\omega_0$, $R = R_{\omega,\dbar_{_{TX}}}$ is the curvature of the Chern connection of $\omega$ with respect to $\dbar_{_{TX}}$ and $F = F_h$ is the curvature of the Chern connection of $h$ on the holomorphic bundle $E$. Since $R$ and $F$ are of type $(1,1)$ (see~\eqref{eq:curvaturesformula}), we have $h^{1/2}F h^{-1/2}\in \Omega_\mathbb{R}^{1,1}(\operatorname{ad}_0E)$ and $h_\omega^{1/2}R h_{\omega}^{-1/2}\in \Omega_\mathbb{R}^{1,1}(\operatorname{ad}TX)$. Hence, the zero locus of $\textbf{L}^\epsilon$ corresponds to solutions of the $\epsilon$-system and (see Lemma~\ref{lemma:L0})
\begin{equation}\label{eq:L0equals0}
\textbf{L}^0(h_0,\omega_0,\dbar_0) = 0.
\end{equation}
We then apply the implicit function theorem to perturb the given solution $(h_0,\omega_0,\dbar_0)$ of \eqref{eq:L0equals0} to a solution of the $\epsilon$-system with $\epsilon > 0$. In order to do this, we consider suitable Hilbert spaces $\mathcal{V}_1$ and $\mathcal{V}_2$ (see~\eqref{eq:V1} and~\eqref{eq:V2}) and $\textbf{L}^\epsilon$ as a map
\begin{equation}\label{eq:Lepsilon0}
\textbf{L}^\epsilon\colon \mathcal{U} \to \mathcal{V}_2,
\end{equation}
where $ \mathcal{U}\subset\mathcal{V}_1$ is an open subset of $\mathcal{V}_1$. The space $\mathcal{V}_1$ is a Sobolev completion of the space of those triples $(h,\mu,\dbar_{_{TX}}) \in \mathcal{H}(E)_1\times \Omega_\mathbb{R}^{1,1}(X) \times\mathcal{D}(TX)$ such that $\dbar_{_{TX}}$ is isomorphic to $\dbar_0$ via a complex gauge transformation on $TX$, while $\mathcal{U}$ is defined by the locus where $\mu$ is positive definite. The space $\mathcal{V}_2$ is simply a Sobolev completion of the codomain in \eqref{eq:Lepsilonsmooth0}. For simplicity, in this section we identify $(h_0,\omega_0,\dbar_0)$ with the origin $0 \in \mathcal{V}_1$. The rigorous analysis is done in~\S\ref{sec:existence}. We compute the linearization of \eqref{eq:Lepsilon0} at $0$ when $\epsilon = 0$
$$
\delta_0\textbf{L}^0 = \delta_0\textbf{L}_1\oplus\delta_0\textbf{L}_2^0\oplus\delta_0\textbf{L}_3\oplus\delta_0\textbf{L}_4\colon \mathcal{V}_1 \to \mathcal{V}_2
$$
and study its mapping properties (see~Lemma~\ref{lemma:deltaL0} and~\eqref{eq:Laplacian0}). By standard properties of the HYM equation, the higher order derivatives in the linear operators $\delta_0\textbf{L}_1$ and $\delta_0\textbf{L}_4$ come from the Laplacian
\begin{equation}\label{eq:LaplacianAsmooth}
\Delta_A\colon \Omega_\mathbb{R}^0(\operatorname{ad}_0E) \to \Omega_\mathbb{R}^0(\operatorname{ad}_0E),
\end{equation}
induced by the Chern connection of $h_0$ on $E$ and the Laplacian
\begin{equation}\label{eq:Laplacian0smooth}
\Delta_0\colon \Omega_\mathbb{R}^0(\operatorname{ad}TX) \to \Omega_\mathbb{R}^0(\operatorname{ad}TX),
\end{equation}
induced by the Chern connection of $\omega_0$ on $TX$, respectively. Here we identify the codomain in \eqref{eq:LaplacianAsmooth} (resp. in \eqref{eq:Laplacian0smooth}) with $\Omega_\mathbb{R}^6(\operatorname{ad}_0E)$ (resp. $\Omega_\mathbb{R}^6(\operatorname{ad}TX)$) in \eqref{eq:Lepsilonsplit} via multiplication by the volume form $\frac{\omega^3_0}{3!}$. A crucial ingredient in our argument is the previous study in~\cite{LiYau} of the linear operator
\begin{equation}\label{eq:operatorT0}
\textbf{T} := \delta_0\textbf{L}_2^0\oplus\delta_0\textbf{L}_3.
\end{equation}
To apply the implicit function theorem we need that the differential of our operator induces an isomorphism at $0\in \mathcal{V}_1$, so we redefine \eqref{eq:Lepsilon0} as follows: first, we define the domain $\mathcal{W}_1 \subset \mathcal{V}_1$ and codomain $\mathcal{W}_2 \subset \mathcal{V}_2$ of the new operator. Let
$$
\mathcal{W}_1 := \mathcal{U} \cap (0 \oplus \operatorname{Ker} \textbf{T} \oplus \operatorname{Ker} \Delta_0)^\perp,
$$
where $(0 \oplus \operatorname{Ker} \textbf{T} \oplus \operatorname{Ker} \Delta_0)^\perp$ denotes the orthogonal complement in $\mathcal{V}_1$ of the direct sum of Kernels $0 \oplus \operatorname{Ker} \textbf{T} \oplus \operatorname{Ker} \Delta_0$ corresponding to \eqref{eq:LaplacianAsmooth}, \eqref{eq:Laplacian0smooth} and \eqref{eq:operatorT0}. Note here that $\operatorname{Ker} \Delta_0$ is given by the covariant constant sections
in $\Omega_\mathbb{R}^0(\operatorname{ad}TX)$, which correspond to the Lie algebra of the centralizer of the holonomy group of $\nabla_0$ in $U(3)$. Note also that, since $E$ is stable, $\operatorname{Ker} \Delta_A = 0$. By previous analysis in~\cite[Proposition~3.3]{LiYau}, the range $\operatorname{Im} \textbf{T}$ of \eqref{eq:operatorT0} is closed and so, by the ellipticity of $\Delta_A$ and $\Delta_0$,
$$
\mathcal{W}_2 := \operatorname{Im} \Delta_A \oplus \operatorname{Im} \textbf{T} \oplus \operatorname{Im} \Delta_0,
$$
defines a closed subspace of $\mathcal{V}_2$. Then, we consider the orthogonal projection $\Pi_{\mathcal{W}_2}\colon \mathcal{V}_2 \to \mathcal{W}_2$ and define a new operator
$$
\textbf{M}^\epsilon\colon \mathcal{W}_1 \to \mathcal{W}_2,
$$
by composition $\textbf{M}^\epsilon:=\Pi_{\mathcal{W}_2} \circ \textbf{L}^\epsilon_{|\mathcal{W}_1}$. In Proposition~\ref{prop:deltaL0bis} we prove that the differential of $\textbf{M}^0$ induces an isomorphism at $0 \in \mathcal{W}_1$ which allows to apply the implicit function theorem in the proof of Theorem~\ref{thm:existence0}. This provides solutions of $\textbf{M}^\epsilon = 0$ with $\epsilon >0$ which are smooth by standard elliptic regularity (see Lemma~\ref{lemma:regularity}). An explicit description of $\textbf{M}^\epsilon$ (see~\eqref{eq:Mepsilonbis}) shows that a solution $(h,\omega,\dbar_{_{TX}})$ of $\textbf{M}^\epsilon = 0$ corresponds to a solution of~\eqref{stromsys}-\eqref{anom} (and so of the Strominger system) provided that $c_2(E) = c_2(X)$. Moreover, such a triple satisfies
$$
\textbf{L}_4(h,\omega,\dbar_{_{TX}}) \in \operatorname{Ker} \Delta_0 \cdot \omega^3_0.
$$
When $(X,\omega_0)$ has strict $SU(3)$ holonomy, $\operatorname{Ker} \Delta_0$ reduces to the constant endomorphisms $i\mathbb{R}\operatorname{Id}$ which allows to prove that actually $\textbf{L}_4(h,\omega,\dbar_{_{TX}}) = 0$ in the proof of Theorem~\ref{thm:existence0}.

\section{Existence of solutions}
\label{sec:existence}

Let $X$ be a compact complex manifold endowed with a nowhere vanishing $(3,0)$-holomorphic form $\Omega$. Let $E$ be a holomorphic vector bundle over $X$ with rank $r$ and $c_1(E) = 0$. We fix a Hermitian form $\omega_0$ on $X$ and a reference Hermitian metric $h_0$ on $E$. Using the same notation as in~\S\ref{sec:methodresult}, given $\epsilon \in \mathbb{R}$ we define a differential operator
\begin{equation}\label{eq:Lepsilonsmooth}
\textbf{L}^\epsilon \colon \mathcal{H}(E)_1 \times \mathcal{H}(X) \times \mathcal{D}(TX) \to \Omega^6_\mathbb{R}(\operatorname{ad}_0E)\oplus\Omega_{\mathbb{R}}^{2,2}(X)\oplus\Omega_{\mathbb{R}}^1(X)\oplus \Omega^6_\mathbb{R}(\operatorname{ad}TX),
\end{equation}
as the direct sum $\textbf{L}^\epsilon = \textbf{L}_1\oplus\textbf{L}_2^\epsilon\oplus\textbf{L}_3\oplus\textbf{L}_4$ (see~\eqref{eq:Lepsilonsplit}). We identify now the zero locus of $L^\epsilon$, with $\epsilon = 0$, relying on previous results on~\cite{Strom} and~\cite{IvanovPapadopoulos}. Let $$(h,\omega,\dbar_{_{TX}}) \in \mathcal{H}(E)_1 \times \mathcal{H}(X) \times \mathcal{D}(TX).$$
\begin{lemma}\label{lemma:L0}
$\operatorname{\mathbf{L}}^0(h,\omega,\dbar_{_{TX}}) = 0$ if and only if $\omega$ is the K\"ahler form of a Calabi-Yau metric on $X$, $h$ is Hermite-Einstein with respect to $\omega$ and $\dbar_{_{TX}} = \dbar_0$ up to gauge transformation on $(TX,h_\omega)$.
\end{lemma}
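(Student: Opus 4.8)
The plan is to prove both implications by working componentwise through $\mathbf{L}^0 = \mathbf{L}_1 \oplus \mathbf{L}_2^0 \oplus \mathbf{L}_3 \oplus \mathbf{L}_4$. Since $\epsilon = 0$, the operator $\dbar_{TX}$ enters only through $\mathbf{L}_4$, so the system decouples: $\mathbf{L}_1 = \mathbf{L}_2^0 = \mathbf{L}_3 = 0$ constrain the pair $(h,\omega)$, and $\mathbf{L}_4 = 0$ then constrains $\dbar_{TX}$ for the resulting $\omega$. Unwinding the definitions in \eqref{eq:Lepsilonsplit} and using that conjugation by $h^{1/2}$ (resp. $h_\omega^{1/2}$) is invertible, $\mathbf{L}^0(h,\omega,\dbar_{TX}) = 0$ is equivalent to the four equations $F \wedge \omega^2 = 0$, $i\partial\dbar\omega = 0$, $d(\|\Omega\|_\omega \omega^2) = 0$ and $R \wedge \omega^2 = 0$.

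The heart of the matter is to show that the two metric equations $i\partial\dbar\omega = 0$ and $d(\|\Omega\|_\omega \omega^2) = 0$ force $\omega$ to be a Calabi-Yau (Kähler, Ricci-flat) metric; this is the content of the results of \cite{Strom, IvanovPapadopoulos} that we invoke. I would argue as follows. Write $f = \|\Omega\|_\omega > 0$ and decompose $\partial\omega = \tau_0 + \omega\wedge\gamma$ into its $\omega$-primitive part $\tau_0 \in \Omega^{2,1}(X)$ and a piece $\omega\wedge\gamma$ with $\gamma \in \Omega^{1,0}(X)$, using that $\omega^2\wedge(\cdot)\colon \Omega^{1,0}(X)\to\Omega^{3,2}(X)$ is an isomorphism (the Lefschetz isomorphism on forms). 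The $(3,2)$-component of $d(f\omega^2) = 0$ reads $\partial f\wedge\omega^2 + 2f\,\omega\wedge\partial\omega = 0$, which upon inserting the decomposition yields $\gamma = -\tfrac12\partial\log f$. Next I would exploit $\partial\dbar\omega = 0$ through a weighted integration by parts: integrating the exact form $d(f\,\dbar\omega\wedge\omega)$ over $X$ and using $\partial\dbar\omega = 0$ together with $\gamma = -\tfrac12\partial\log f$ produces the identity
\[
\int_X f\,|\tau_0|^2\,\frac{\omega^3}{3!} + 2\int_X f\,|\gamma|^2\,\frac{\omega^3}{3!} = 0,
\]
where positivity of the integrands comes from the Hodge-Riemann bilinear relations for primitive forms. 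Since $f > 0$, both terms vanish, so $\tau_0 = 0$ and $\gamma = 0$; hence $\partial\omega = 0$, i.e. $\omega$ is Kähler, and $\partial\log f = 0$, i.e. $\|\Omega\|_\omega$ is constant, which for a Kähler metric is equivalent to $\omega$ being Ricci-flat, and so Calabi-Yau.

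It then remains to read off the two gauge-theoretic equations in the presence of this Calabi-Yau metric. The equation $F\wedge\omega^2 = 0$ is equivalent to $\Lambda_\omega F = 0$, and since $c_1(E) = 0$ and $h \in \mathcal{H}(E)_1$ the Einstein constant vanishes, so this says exactly that $h$ is Hermite-Einstein with respect to $\omega$. For the last equation, $R\wedge\omega^2 = 0$ says that the Chern connection of $(TX,\dbar_{TX},h_\omega)$ is Hermite-Yang-Mills; since $\omega$ is Kähler and Ricci-flat, the Levi-Civita connection of $\omega$ is the Chern connection of $(TX,\dbar_0,h_\omega)$ and is already Hermite-Yang-Mills. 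Invoking uniqueness of the Hermite-Einstein connection on the fixed Hermitian bundle $(TX,h_\omega)$ over $(X,\omega)$, the operator $\dbar_{TX}$ must then lie in the unitary gauge orbit of $\dbar_0$. The converse is a direct verification: if $\omega$ is Calabi-Yau then $d\omega = 0$ gives $\mathbf{L}_2^0 = 0$ and $\|\Omega\|_\omega$ constant gives $\mathbf{L}_3 = 0$; if $h$ is Hermite-Einstein then $\mathbf{L}_1 = 0$; and if $\dbar_{TX}$ is gauge-equivalent to $\dbar_0$ then, $\omega$ being Ricci-flat Kähler, $\mathbf{L}_4 = 0$.

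I expect the main obstacle to be the Calabi-Yau step of the second paragraph: producing the correct weighted integral identity and pinning down the signs in the Hodge-Riemann positivity so that the two integrals genuinely appear with the same (nonnegative) sign, which is what lets one conclude $\partial\omega = 0$ rather than merely a relation between $\|\tau_0\|^2$ and $\|\gamma\|^2$. The second delicate point is the rigidity statement for $\dbar_{TX}$: one must ensure that the Hermite-Yang-Mills condition for the Calabi-Yau metric determines the holomorphic structure on $TX$ up to unitary gauge, which is precisely where the uniqueness in the Donaldson-Uhlenbeck-Yau correspondence is essential.
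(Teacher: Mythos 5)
Your proposal is correct, and its skeleton --- decoupling the four components of $\textbf{L}^0$, reading off the Hermite--Einstein condition from $\textbf{L}_1=0$, and using uniqueness of the HYM connection to pin down $\dbar_{_{TX}}$ --- matches the paper's. The genuine difference is in the central step, namely that $\partial\dbar\omega=0$ together with $d(\|\Omega\|_\omega\omega^2)=0$ forces $\omega$ to be K\"ahler Ricci-flat. The paper outsources this: it quotes Strominger's observation that the conformally balanced condition gives the Bismut connection holonomy in $SU(3)$, and then invokes the Ivanov--Papadopoulos vanishing theorem to conclude that the Bismut and Chern connections coincide and $\omega$ is K\"ahler. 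You instead give a self-contained Bochner-type argument, which is in effect a direct proof of the special case of Ivanov--Papadopoulos needed here. I checked your computation: with $f=\|\Omega\|_\omega$, the $(3,2)$-part of $d(f\omega^2)=0$ does give $\gamma=-\tfrac12\partial\log f$ (using $\tau_0\wedge\omega=0$ for primitive $(2,1)$-forms and the Lefschetz isomorphism), and Stokes applied to $d(f\,\dbar\omega\wedge\omega)$ together with $\partial\dbar\omega=0$ yields, via the Hodge--Riemann identities $\tau_0\wedge\bar\tau_0=i|\tau_0|^2\tfrac{\omega^3}{3!}$ and $\gamma\wedge\bar\gamma\wedge\omega^2=-2i|\gamma|^2\tfrac{\omega^3}{3!}$, exactly $\int_X f(|\tau_0|^2+2|\gamma|^2)\tfrac{\omega^3}{3!}=0$; so the two terms do appear with the same sign and $\tau_0=\gamma=0$ follows. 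Your route buys transparency and independence from the cited literature at the cost of length; the paper's buys brevity and a conceptual statement about the Bismut connection. One caveat, which your proof shares with the paper's rather than introduces: the final uniqueness step tacitly assumes that $\dbar_{_{TX}}$ lies in the complex gauge orbit of $\dbar_0$ (DUY uniqueness identifies HYM Dolbeault operators only within a fixed orbit, and a priori $\mathcal{D}(TX)$ contains other polystable holomorphic structures on the smooth bundle $TX$); this is harmless for the use made of the lemma, where $\dbar_{_{TX}}=e^{ia}\dbar_0e^{-ia}$ by construction, but is worth flagging, as you do.
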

\begin{proof}
Note first that $\textbf{L}_1(h,\omega) = 0$ if and only if $h$ is Hermite-Einstein with respect to $\omega$. We start with the `if part' of the statement. Since $\omega$ is Ricci flat, $\Omega$ is parallel and so has constant norm. Hence, $d\omega = 0$ implies $\textbf{L}_2^0(h,\omega,\dbar_{_{TX}}) = \textbf{L}_3(\omega) =  0$. The Chern connection of $\omega$ with respect to $\dbar_0$ is HYM and so $\textbf{L}_4(\omega,\dbar_{_{TX}}) = 0$ for any $\dbar_{_{TX}}$ gauge equivalent to $\dbar_0$. For the `only if part', $\textbf{L}_3(\omega) = 0$ implies that the Bismut connection of $\omega$ has $SU(3)$-holonomy (see~\cite[Section~II]{Strom}). Moreover, $\textbf{L}_2^0(\omega) = 0$ implies that $\partial\dbar\omega = 0$. Applying now~\cite[Corollary~4.7]{IvanovPapadopoulos}, the Bismut connection and the Chern connection coincide and $\omega$ is the K\"ahler form of a Calabi-Yau metric on $X$. Since $\textbf{L}_4(\omega,\dbar_{_{TX}}) = 0$ and the Chern connection of the Calabi-Yau metric is HYM, uniqueness implies that $\dbar_{_{TX}} = \dbar_0$ up to gauge transformation on $(TX,h_\omega)$.
\end{proof}

In the sequel we assume that $(X,\omega_0)$ is a Calabi-Yau threefold and that $E$ is stable with respect to $[\omega_0]$, with $h_0 \in \mathcal{H}(E)_1$ the unique Hermite-Einstein metric given by the Donaldson-Uhlenbeck-Yau Theorem. By Lemma~\ref{lemma:L0}, $(h_0,\omega_0,\dbar_0)$ is a solution of $\textbf{L}^0 = 0$. We compute now
$$
\delta_0\textbf{L}^0 = \delta_0\textbf{L}_1\oplus\delta_0\textbf{L}_2^0\oplus\delta_0\textbf{L}_3\oplus\delta_0\textbf{L}_4,
$$
the differential of $\textbf{L}^0$ at $(h_0,\omega_0,\dbar_0)$, and study its mapping properties. Let $D_A$ and $\nabla_0$ be the covariant derivative of $h_0$ on $E$ and $h_{\omega_0}$ on $(TX,\dbar_0)$ and consider their standard decompositions
$$
D_A = \partial_A + \dbar_A \qquad \textrm{and} \qquad \nabla_0 = \partial_0 + \dbar_0,
$$
with respect to the complex structure on $X$. We use the canonical isomorphisms
$$
T_{h_0}\mathcal{H}(E)_{1} \cong i\cdot\Omega_\mathbb{R}^0(\operatorname{ad}_0E), \ \ T_{\omega_0}\mathcal{H}(X) \cong \Omega_{\mathbb{R}}^{1,1}(X),
$$
the inclusion $T_{\dbar_0}\mathcal{D}(TX)\subset \Omega^{0,1}({\rm End}(TX))$ and denote $0 < c := \|\Omega\|_{\omega_0} \in \mathbb{R}$. The elements in $T_{h_0}\mathcal{H}(E)_{1}$, $T_{\omega_0}\mathcal{H}(X)$ and $T_{\dbar_0}\mathcal{D}(TX)$ are denoted respectively by $\delta h$, $\delta \omega$ and $\delta \dbar$. Given $\delta\omega \in \Omega_{\mathbb{R}}^{1,1}(X)$, we denote by $\delta h_\omega$ the symmetric endomorphism of $(TX,h_{\omega_0})$ determined by the identification between Hermitian metrics and determinant one symmetric endomorphism given by $\langle\cdot,\cdot\rangle_{h_\omega}=\langle h_{\omega}\cdot,\cdot \rangle_{h_{\omega_0}}$ (cf.~\eqref{eq:omegahomega}). Let
\begin{equation}\label{eq:formulaLaplacian}
\Delta_A := D_A^* D_A + D_A D_A^*\colon \Omega_\mathbb{R}^0(\operatorname{ad}_0E) \to \Omega_\mathbb{R}^0(\operatorname{ad}_0E),
\end{equation}
denote the Laplacian operator induced by $A$, where $D_A^*$ is the adjoint of $D_A$ with respect to $h_0$ and $\omega_0$. In the sequel we use the standard notation $d^c = i(\dbar - \partial)$.
\begin{lemma}\label{lemma:deltaL0}
The map $\delta_0\textbf{L}^0$ is given by
\begin{align*}
\delta_0\textbf{L}_1(\delta h,\delta\omega) & = \Delta_A(-i\delta h)\frac{\omega_0^3}{3!} + 2 F_{h_0}\wedge \delta \omega \wedge \omega_0,\\
\delta_0\textbf{L}_2^0(\delta\omega) & = \frac{1}{2}dd^c(\delta\omega),\\
\delta_0\textbf{L}_3(\delta\omega) & =  c \; d^*(2\delta\omega - (\omega_0,\delta\omega)\omega_0),\\
\delta_0\textbf{L}_4(\delta\omega,\delta \dbar) & = \nabla_0(\delta\dbar - (\delta\dbar)^*)\wedge \omega_0^2 + \dbar_0\partial_0(\delta h_\omega)\wedge\omega_0^2 + 2 R_{\nabla_0}\wedge \delta \omega \wedge \omega_0,
\end{align*}
where $(\delta\dbar)^*$ and $(\omega_0,\delta\omega)$ denote the adjoint of $\delta\dbar$ and the inner product on forms with respect to $\omega_0$, respectively.
\end{lemma}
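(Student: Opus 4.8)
The plan is to linearize the four components $\textbf{L}_1,\textbf{L}_2^0,\textbf{L}_3,\textbf{L}_4$ separately at the canonical solution $(h_0,\omega_0,\dbar_0)$, using throughout its three defining features: $\omega_0$ is Kähler and Ricci-flat, so $d\omega_0 = 0$, $\Omega$ is parallel and $\|\Omega\|_{\omega_0} = c$ is constant; $h_0$ is Hermite-Einstein of degree zero, so $\Lambda_{\omega_0} F_{h_0} = 0$; and $\nabla_0$ on $TX$ is Ricci-flat, so $\Lambda_{\omega_0} R_{\nabla_0} = 0$. These trace-vanishing facts are exactly what will kill the zeroth-order (commutator) terms. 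I would use repeatedly the identity $\beta\wedge\omega_0^2 = 2(\Lambda\beta)\frac{\omega_0^3}{3!}$ for $(1,1)$-forms $\beta$, the Kähler identity $[\Lambda,\dbar_A] = -i\partial_A^*$, and its analogue for $\nabla_0$. For $\textbf{L}_2^0$ the computation is immediate: since $\epsilon = 0$, the map $\omega\mapsto i\partial\dbar\omega$ is already linear and equals its own differential, and $dd^c = 2i\partial\dbar$ (from $d^c = i(\dbar-\partial)$, $\partial^2 = \dbar^2 = 0$, $\partial\dbar = -\dbar\partial$) rewrites it as $\frac12 dd^c(\delta\omega)$.

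For $\textbf{L}_1$ I would split the variation into the $\delta\omega$ and $\delta h$ directions. Only the outer factor $\omega^2$ depends on $\omega$, and $\delta(\omega^2) = 2\omega_0\wedge\delta\omega$ yields the term $2F_{h_0}\wedge\delta\omega\wedge\omega_0$. In the $\delta h$ direction, the standard formula for the variation of the Chern curvature gives $\delta F = \dbar_A\partial_A(\delta h)$, with $\delta h$ read as an $h_0$-Hermitian endomorphism, while differentiating the conjugation $h^{1/2}(\cdot)h^{-1/2}$ produces $\frac12[\delta h, F_{h_0}]$; wedged with $\omega_0^2$ this commutator becomes $\frac12[\delta h, F_{h_0}\wedge\omega_0^2] = 0$ by $\Lambda F_{h_0} = 0$. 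It then remains to convert $\dbar_A\partial_A(\delta h)\wedge\omega_0^2$ into $\Delta_A(-i\delta h)\frac{\omega_0^3}{3!}$: the Kähler identity reduces $\Lambda\dbar_A\partial_A$ on sections to $-i\partial_A^*\partial_A$, and the Bochner--Kodaira identity together with $\Lambda F_{h_0}=0$ gives $\Delta_A = 2\partial_A^*\partial_A$ on $\Omega^0_\mathbb{R}(\operatorname{ad}_0 E)$, so the $i$-factors and the factor $2$ combine correctly.

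For $\textbf{L}_3 = *d(\|\Omega\|_\omega\omega^2)$ the key point is the variation of the pointwise norm. Since $\Omega\wedge\bar\Omega$ is $\omega$-independent and $\|\Omega\|_\omega^2\frac{\omega^3}{3!}$ is a fixed multiple of it, differentiating $\|\Omega\|_\omega^2\,\omega^3 = \mathrm{const}$ and using $\omega_0^2\wedge\delta\omega = (\omega_0,\delta\omega)\frac{\omega_0^3}{3}$ yields $\delta\|\Omega\|_\omega = -\frac c2(\omega_0,\delta\omega)$, hence $\delta(\|\Omega\|_\omega\omega^2) = -\frac c2(\omega_0,\delta\omega)\omega_0^2 + 2c\,\omega_0\wedge\delta\omega$. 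Applying $*d$ and using $d\omega_0 = 0$, I would recognize the outcome as $c\,d^*$ of a $2$-form: from the Hodge-star formula $*\alpha = -\omega_0\wedge\alpha + \frac12(\Lambda\alpha)\omega_0^2$ for $(1,1)$-forms on a Kähler threefold one checks $*(2\delta\omega - (\omega_0,\delta\omega)\omega_0) = -2\omega_0\wedge\delta\omega + \frac12(\omega_0,\delta\omega)\omega_0^2$, and since $d^* = -*d*$ on $2$-forms in real dimension six, applying $-c\,*d*$ to $2\delta\omega - (\omega_0,\delta\omega)\omega_0$ reproduces exactly $*d\big(\delta(\|\Omega\|_\omega\omega^2)\big)$, giving the stated $c\,d^*(2\delta\omega - (\omega_0,\delta\omega)\omega_0)$.

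Finally, $\textbf{L}_4$ is where I expect the main obstacle, because its curvature $R$ depends on both the metric $h_\omega$ and the holomorphic structure $\dbar_{_{TX}}$, and $\delta\omega$ enters both through $h_\omega$ and through the outer $\omega^2$. I would organize the variation into three pieces. The outer factor contributes $2R_{\nabla_0}\wedge\delta\omega\wedge\omega_0$ exactly as in $\textbf{L}_1$. The metric piece is formally identical to the $\delta h$ computation for $\textbf{L}_1$ and gives $\dbar_0\partial_0(\delta h_\omega)\wedge\omega_0^2$, the conjugation commutator $\frac12[\delta h_\omega, R_{\nabla_0}\wedge\omega_0^2]$ now vanishing by Ricci-flatness $\Lambda R_{\nabla_0} = 0$. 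The genuinely new piece is the variation of $\dbar_{_{TX}}$: a deformation $\delta\dbar\in\Omega^{0,1}(\operatorname{End}TX)$ changes the $(0,1)$-part of the Chern connection by $\delta\dbar$, and compatibility with the fixed metric $h_{\omega_0}$ forces the $(1,0)$-part to change by $-(\delta\dbar)^*$, so $\delta\nabla = \delta\dbar - (\delta\dbar)^*$ and $\delta R = \nabla_0(\delta\dbar - (\delta\dbar)^*)$, giving the first term. The hard part will be precisely this simultaneous linearization of the Chern connection of $(TX,\dbar_{_{TX}})$ in both the metric and the holomorphic-structure directions: one must prevent the two deformations from interfering at first order, extract the correct metric adjoint $-(\delta\dbar)^*$ from the compatibility condition, and confirm that all zeroth-order commutator contributions are annihilated by the Hermite--Einstein and Ricci-flat conditions. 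Once these are in place, collecting the three pieces yields the stated expression for $\delta_0\textbf{L}_4$.
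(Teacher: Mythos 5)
Your proposal is correct and follows essentially the same route as the paper: linearize each component separately, use $\delta R=\nabla_0(\delta\nabla)$ with $\delta\nabla(\delta\dbar)=\delta\dbar-(\delta\dbar)^*$ and $\delta\nabla(\delta h_\omega)=\partial_0(\delta h_\omega)$, and convert $\dbar_A\partial_A(\delta h)\wedge\omega_0^2$ into $\Delta_A(-i\delta h)\frac{\omega_0^3}{3!}$ via the K\"ahler identities. The only difference is one of detail: you carry out the $\textbf{L}_3$ variation and the vanishing of the conjugation commutators (via $F_{h_0}\wedge\omega_0^2=0$ and $R_{\nabla_0}\wedge\omega_0^2=0$) explicitly, whereas the paper delegates these points to \cite[Proposition~3.2]{LiYau} and \cite[Proposition~3]{Don}.
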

\begin{proof}
The formula for $\delta_0\textbf{L}_2^0$ is obvious, since $dd^c$ is a linear operator. The $\delta_0\textbf{L}_3$ term has been computed previously in \cite[Proposition~3.2]{LiYau}, where $c = \|\Omega\|_{\omega_0}$. For the computation of $\delta_0\textbf{L}_4$, recall that an infinitesimal change $\delta\nabla$ in the connection $\nabla_0$ produces an infinitesimal change of the curvature $\delta R(\delta\nabla) = \nabla_0(\delta\nabla)$. Then, since
$$
\delta_0\textbf{L}_4(\delta\omega,\delta \dbar) = \delta R(\delta\dbar)\wedge \omega_0^2 + \delta R(\delta h_\omega)\wedge\omega_0^2 + 2 R_{\nabla_0}\wedge \delta \omega \wedge \omega_0,
$$
the result follows from $\delta\nabla(\delta\dbar) = \delta\dbar - (\delta\dbar)^*$ and $\delta\nabla(\delta h_\omega) = \partial_0\delta h_\omega$. A similar computation shows that
$$
\delta_0\textbf{L}_1(\delta h,\delta\omega) = \dbar_A\partial_A(\delta h)\wedge\omega_0^2 + 2 F_{h_0}\wedge \delta \omega \wedge \omega_0,
$$
and using the K\"ahler identities on $(X,\omega_0)$ we obtain (see the proof of~\cite[Proposition~3]{Don})
\begin{equation}\label{eq:Laplacianconnection}
\dbar_A\partial_A(\delta h)\wedge \omega_0^2 = \Delta_A(-i\delta h)\frac{\omega_0^3}{3!},
\end{equation}
proving the claim.
\end{proof}

In order for our deformation map to have domain and codomain which are (locally) isomorphic, we consider the map
$$
\Omega_{\mathbb{R}}^0(\operatorname{ad}TX)\to \mathcal{D}(TX)\colon a \mapsto \dbar_a:=e^{{ia}}\dbar_0 e^{-ia},
$$
given by the standard action of $e^{ia}$ on $\dbar_0$, which is well defined since $\dbar_a^2 = 0$. Note that any element in the image of this map is isomorphic to $\dbar_0$ via a complex gauge transformation. Furthermore, we use the isomorphism (cf. \eqref{eq:omegahomega})
$$
\Omega_{\mathbb{R}}^0(\operatorname{ad}_0E) \to \mathcal{H}(E)_1\colon b \to \langle h_b\cdot,\cdot\rangle_{h_0},
$$
where $h_b = e^{ib}$ to define a new operator by
\begin{equation}\label{eq:Lepsilonsmoothbis}
(b,\omega,a) \mapsto \textbf{L}^\epsilon(h_b,\omega,\dbar_a),
\end{equation}
with domain $\Omega_{\mathbb{R}}^0(\operatorname{ad}_0E) \times \mathcal{H}(X) \times \Omega_{\mathbb{R}}^0(\operatorname{ad}TX)$ and codomain as in \eqref{eq:Lepsilonsmooth}. For simplicity, the operator defined by \eqref{eq:Lepsilonsmoothbis} will also be denoted by $\textbf{L}^\epsilon$. Note that the canonical solution of $\textbf{L}^0 = 0$ is now given by $(0,\omega_0,0)$ and that the expressions for $\delta_0\textbf{L}^0_2$ and $\delta_0\textbf{L}_3$ in the differential $\delta_0\textbf{L}^0$ at $(0,\omega_0,0)$ remain unchanged (see Lemma~\ref{lemma:deltaL0}), while the new expression for $\delta_0\textbf{L}_1$ is obtained by the change of variable $\delta h = ib$. To compute the new expression for $\delta_0\textbf{L}_4$, let $(\mu,a) \in \Omega_{\mathbb{R}}^{1,1}(X)\oplus \Omega_{\mathbb{R}}^0(\operatorname{ad}TX)$. Then,
\begin{equation}\label{eq:Laplacian0}
\begin{split}
\delta_0\textbf{L}_4(\mu,a) & = \nabla_0(-i\dbar_0a - (-i\dbar_0a)^*)\wedge \omega_0^2 + \dbar_0\partial_0h_\mu\wedge\omega_0^2 + 2 R_{\nabla_0}\wedge\mu\wedge \omega_0\\
& = 2i\dbar_0\partial_0a\wedge \omega_0^2 + \dbar_0\partial_0h_\mu\wedge\omega_0^2 + 2 R_{\nabla_0}\wedge\mu\wedge \omega_0\\
& = \frac{1}{3}\Delta_{0}(a - \frac{ih_\mu}{2})\omega_0^3 + 2R_{\nabla_0}\wedge\mu\wedge \omega_0,
\end{split}
\end{equation}
where $h_\mu$ denotes the symmetric endomorphisms of $(TX,h_{\omega_0})$ determined by $\mu$ and $\Delta_{0}$ denotes the Laplacian operator induced by the connection $\nabla_0$ (cf.~\eqref{eq:formulaLaplacian}). Here we have used the equality
$$
(-i\partial_0\dbar_0a + i\dbar_0\partial_0 a)\wedge \omega_0^2 = 2i\partial_0\dbar_0a\wedge \omega_0^2 - i R_{\nabla_0}a\wedge\omega_0^2 = 2i\partial_0\dbar_0a\wedge \omega_0^2,
$$
together with $(i\dbar_0a)^* = i\partial_0a$ and the analogue of formula~\eqref{eq:Laplacianconnection} for $\Delta_0$.

To study the mapping properties of $\delta_0 \textbf{L}^0$ we introduce Sobolev spaces. Given a smooth real vector bundle $W$ over $X$ endowed with a metric, we consider the real $L^2$ pairings on $\Omega_{\mathbb{R}}^m(W)$ and $\Omega_{\mathbb{R}}^{p,q}(W)$ provided by the metric on $W$ and the K\"ahler form $\omega_0$. Let $\Omega_{\mathbb{R}}^m(W)_{L^2_{k}}$ and $\Omega_{\mathbb{R}}^{p,q}(W)_{L^2_{k}}$ be the respective Sobolev completions, consisting of sections whose first $k$-distributional derivatives are in $L^2$. Since $\dim_{\mathbb{C}} X = 3$, the Sobolev embedding theorem (see e.g.~\cite{Aubin}) states that if $k > 3 + l$ then the elements in $\Omega_{\mathbb{R}}^m(W)_{L^2_{k}}$ and $\Omega_{\mathbb{R}}^{p,q}(W)_{L^2_{k}}$ are of class $C^l$. Furthermore, for $k > 3$ these spaces are Banach algebras. Let $k > 3$ and consider the Hilbert spaces
\begin{equation}\label{eq:V1}
\mathcal{V}_1 := \Omega_\mathbb{R}^0(\operatorname{ad}_0E)_{L^2_k} \oplus \Omega_{\mathbb{R}}^{1,1}(X)_{L^2_k}\oplus \Omega_\mathbb{R}^0(\operatorname{ad}TX)_{L^2_k},
\end{equation}
and
\begin{equation}\label{eq:V2}
\mathcal{V}_2 := \Omega^6_\mathbb{R}(\operatorname{ad}_0E)_{L^2_{k-2}}\oplus\Omega_{\mathbb{R}}^{2,2}(X)_{L^2_{k-2}}\oplus\Omega_{\mathbb{R}}^1(X)_{L^2_{k-1}}\oplus \Omega^6_\mathbb{R}(\operatorname{ad}TX)_{L^2_{k-2}},
\end{equation}
where the metric on the bundles $\operatorname{ad}_0E$ and $\operatorname{ad}TX$ is given by $-\operatorname{tr}$. Let $\mathcal{U} \subset \mathcal{V}_1$ be open subset given by triples $(h,\mu,a)\in \mathcal{V}_1$ such that $\mu$ is positive definite. By the Banach algebra properties of the previous spaces for $k > 3$, the operator \eqref{eq:Lepsilonsmooth} induces a well defined map
\begin{equation}\label{eq:Lepsilonsobolev}
\textbf{L}^\epsilon\colon\mathcal{U}\to\mathcal{V}_2.
\end{equation}
A straightforward computation of the differential of $\textbf{L}^\epsilon$ at an arbitrary point of $\mathcal{U}$ shows that \eqref{eq:Lepsilonsobolev} is a $C^1$ map (cf. Lemma~\ref{lemma:deltaL0} and \cite[Proposition~3.2]{LiYau}).

To apply the implicit function theorem we need that the differential of our map induces an isomorphism at $(0,\omega_0,0)$, so we redefine \eqref{eq:Lepsilonsobolev} as follows. Let
\begin{equation}\label{eq:Laplacianssobolev}
\Delta_A\colon \Omega^0_\mathbb{R}(\operatorname{ad}_0E)_{L^2_k} \to \Omega^0_\mathbb{R}(\operatorname{ad}_0E)_{L^2_{k-2}}, \qquad \Delta_0\colon \Omega^0_\mathbb{R}(\operatorname{ad}TX)_{L^2_k} \to \Omega^0_\mathbb{R}(\operatorname{ad}TX)_{L^2_{k-2}},
\end{equation}
be the operators induced for any $k$ by the Laplacians $\Delta_A$ and $\Delta_0$, respectively. Note that, since $E$ is stable, $\Delta_A$ in \eqref{eq:Laplacianssobolev} is an isomorphism. Consider the differential operator
$$
\textbf{T}\colon \Omega_{\mathbb{R}}^{1,1}(X)_{L^2_{k}} \to \Omega_{\mathbb{R}}^{2,2}(X)_{L^2_{k-2}}\oplus\Omega_{\mathbb{R}}^1(X)_{L^2_{k-1}},
$$
given by
\begin{equation}\label{eq:operatorT}
\textbf{T}(\mu):= \delta_0 \textbf{L}_2\oplus\delta_0 \textbf{L}_3 \;(\mu) = (\frac{1}{2}dd^c\mu,c \;d^*(2\mu - (\mu,\omega_0)\omega_0)).
\end{equation}
Note here that $\textbf{T}$ only depends on $(X,\omega_0)$. We now recall a previous result due to Li-Yau concerning \eqref{eq:operatorT} (contained in the proof of \cite[Proposition~3.3]{LiYau}) which is crucial for our argument. Let $\operatorname{Im}(dd^c) \subset \Omega_{\mathbb{R}}^{2,2}(X)_{L^2_{k-2}}$ and $\operatorname{Im}(d^*)\subset \Omega_{\mathbb{R}}^1(X)_{L^2_{k-1}}$ be the closed subspaces given by the range of the differential operators
$$
dd^c \colon \Omega_{\mathbb{R}}^{1,1}(X)_{L^2_{k}} \to \Omega_{\mathbb{R}}^{2,2}(X)_{L^2_{k-2}} \qquad \textrm{and}\qquad d^* \colon \Omega_{\mathbb{R}}^{1,1}(X)_{L^2_{k}} \to \Omega_{\mathbb{R}}^1(X)_{L^2_{k-1}}.
$$
\begin{prop}[Li \& Yau]\label{prop:LiYau}
The operator $\operatorname{\mathbf{T}}$ has closed range $\operatorname{Im}\operatorname{\mathbf{T}}$ given by $ \operatorname{Im}(dd^c) \oplus \operatorname{Im}(d^*)$ and its kernel $\operatorname{Ker}\operatorname{\mathbf{T}}\subset \Omega_{\mathbb{R}}^{1,1}(X)$ is the subspace of $\omega_0$-harmonic $(1,1)$ forms on $X$.
\end{prop}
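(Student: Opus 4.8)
The plan is to prove the two assertions---the identification of $\operatorname{Ker}\mathbf{T}$ and the description of $\operatorname{Im}\mathbf{T}$---separately, in both cases exploiting that $(X,\omega_0)$ is K\"ahler so that the K\"ahler identities $[\Lambda,d^c]=d^*$ and $[\Lambda,\partial]=i\dbar^*$ are at our disposal; here $\Lambda$ is the adjoint of $L=\omega_0\wedge\,\cdot\,$. I write $f:=(\mu,\omega_0)=\Lambda\mu$ for the pointwise trace of $\mu$, and first record the identity $d^*(f\omega_0)=-d^cf$, valid for any function $f$ on a compact K\"ahler threefold; it follows from $[\Lambda,d^c]=d^*$ together with $d^c\omega_0=0$ and $\Lambda\omega_0=3$. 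Using it, the vanishing of the second component of $\mathbf{T}(\mu)$ can be rewritten as $d^*\mu=-\tfrac12 d^cf$, equivalently, splitting into types, $\dbar^*\mu=\tfrac{i}{2}\partial f$ and $\partial^*\mu=-\tfrac{i}{2}\dbar f$.

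For $\operatorname{Ker}\mathbf{T}$, the inclusion of the $\omega_0$-harmonic $(1,1)$-forms is the easy half: a harmonic form on a compact K\"ahler manifold is $\partial$- and $\dbar$-closed and co-closed, so $d^c\mu=0$ and $d^*\mu=0$, while $\Lambda\mu$ is a harmonic function, hence constant; since $d^*\omega_0=0$ this gives $dd^c\mu=0$ and $d^*(2\mu-f\omega_0)=0$. For the reverse inclusion I would argue by an energy identity. Assuming $\mathbf{T}(\mu)=0$, from $dd^c\mu=0$, i.e. $\partial\dbar\mu=0$, and the K\"ahler identity $\dbar^*=-i[\Lambda,\partial]$ applied to the $(1,2)$-form $\dbar\mu$, one computes $\dbar^*\dbar\mu=i\partial(\Lambda\dbar\mu)$; and $[\Lambda,\dbar]=-i\partial^*$ together with $\partial^*\mu=-\tfrac{i}{2}\dbar f$ gives $\Lambda\dbar\mu=\dbar f-i\partial^*\mu=\tfrac12\dbar f$, so that $\dbar^*\dbar\mu=\tfrac{i}{2}\partial\dbar f$. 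Pairing with $\mu$, integrating by parts, and using once more $d^*\mu=-\tfrac12 d^cf$ to replace $(dd^c)^*\mu$ by $-\tfrac12\Delta f$ on functions, yields an identity of the shape
\[
\|\dbar\mu\|^2_{L^2}=-c_0\,\|df\|^2_{L^2},\qquad c_0>0 .
\]
Since the left-hand side is non-negative, both sides vanish: $\dbar\mu=0$ (hence $d\mu=0$ by reality) and $f$ is constant, whence $d^*\mu=-\tfrac12 d^cf=0$, so $\mu$ is harmonic. This energy identity is the technical heart of the kernel computation, and extracting the favourable sign of the cross term from the K\"ahler identities is the step to be careful about.

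For $\operatorname{Im}\mathbf{T}$, the inclusion $\operatorname{Im}\mathbf{T}\subset\operatorname{Im}(dd^c)\oplus\operatorname{Im}(d^*)$ is immediate from the definition, so the content is closedness together with the reverse inclusion. Closedness I would obtain from the overdetermined ellipticity of $\mathbf{T}$ in the Douglis--Nirenberg sense, with equation weights $2$ and $1$: the combined principal symbol at $0\neq\xi\in T^*_xX$ sends $\mu\in\Lambda^{1,1}_x$ to $\bigl(\xi^{1,0}\wedge\xi^{0,1}\wedge\mu,\ \iota_{\xi^\sharp}(P\mu)\bigr)$, where $P:=2\operatorname{Id}-L\Lambda$. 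A short computation in a unitary coframe with $\xi=dx^1$ shows this is injective: the first slot forces the components of $\mu$ with both indices transverse to $\xi$ to vanish, and the second slot then annihilates the remaining components, which involve the $\xi$-direction. Injective principal symbol gives finite-dimensional kernel---consistent with the harmonic forms found above---elliptic estimates, and in particular closed range.

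The reverse inclusion $\operatorname{Im}(dd^c)\oplus\operatorname{Im}(d^*)\subset\operatorname{Im}\mathbf{T}$ is where I expect the real work to lie. I would approach it constructively through the Hodge decomposition $\mu=\mu_{\mathrm h}\oplus\mu_{\mathrm e}\oplus\mu_{\mathrm{ce}}$ of a $(1,1)$-form into harmonic, $d$-exact, and $d^*$-exact parts, exploiting a triangular structure: $dd^c$ annihilates both $\mu_{\mathrm h}$ and $\mu_{\mathrm e}$, so the equation $\tfrac12 dd^c\mu=\alpha$ only constrains $\mu_{\mathrm{ce}}$ and is solvable there because $\alpha\in\operatorname{Im}(dd^c)$; while $d^*P$ annihilates $\mu_{\mathrm h}$ (as $P$ preserves harmonic forms), so once $\mu_{\mathrm{ce}}$ is fixed the remaining freedom in $\mu_{\mathrm e}$ is used to solve $c\,d^*(P\mu)=\beta$. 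The crux is the solvability of this last, first-order, equation for the $d$-exact part; at the symbol level it reduces to the surjectivity of $\eta\mapsto\iota_{\xi^\sharp}(P\eta)$ onto $\operatorname{Im}\sigma_{d^*}(\xi)$, which together with the closed-range property and a dimension count---the symbol of $\mathbf{T}$ is in fact an isomorphism of $\Lambda^{1,1}_x$ (real dimension $9$) onto $\operatorname{Im}\sigma_{dd^c}(\xi)\oplus\operatorname{Im}\sigma_{d^*}(\xi)$, of respective dimensions $4$ and $5$---yields the claim. Equivalently, since $\operatorname{Im}\mathbf{T}$ is now known to be closed, one may reduce to showing $\operatorname{Ker}\mathbf{T}^*\cap\bigl(\operatorname{Im}(dd^c)\oplus\operatorname{Im}(d^*)\bigr)=0$, i.e. that no nonzero element of the target subspace is $L^2$-orthogonal to the range; this I would settle by a Hodge-theoretic argument dual to the kernel computation, using Bott--Chern theory for the $dd^c$-factor and standard Hodge theory for the $d^*$-factor. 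This reverse inclusion is the main obstacle.
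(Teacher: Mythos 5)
Your attempt should first be measured against the fact that the paper does not prove this proposition at all: it is quoted from the proof of Proposition~3.3 of Li--Yau \cite{LiYau}, so there is no in-paper argument to match and any complete proof here is self-contained work. Two of your three steps hold up. The kernel computation is correct, and the sign you flag as delicate does come out favourably: with $[\Lambda,\dbar]=-i\partial^*$ and $[\Lambda,\partial]=i\dbar^*$ one gets $d^*(f\omega_0)=-d^cf$, hence $\partial^*\mu=-\tfrac{i}{2}\dbar f$, then $\Lambda\dbar\mu=\tfrac12\dbar f$ and $\dbar^*\dbar\mu=\tfrac{i}{2}\partial\dbar f$, and pairing with $\mu$ gives $\|\dbar\mu\|^2_{L^2}=\tfrac{i}{2}\langle\dbar f,\partial^*\mu\rangle_{L^2}=-\tfrac14\|\dbar f\|^2_{L^2}$, forcing $\dbar\mu=0$ and $f$ constant, so $\mu$ is harmonic. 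The closed-range step via injectivity of the Douglis--Nirenberg symbol $\mu\mapsto(\xi^{1,0}\wedge\xi^{0,1}\wedge\mu,\iota_{\xi^\sharp}(P\mu))$ is also sound; the coframe computation and the count $9=4+5$ check out.

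The genuine gap is the inclusion $\operatorname{Im}(dd^c)\oplus\operatorname{Im}(d^*)\subset\operatorname{Im}\operatorname{\mathbf{T}}$, which you correctly identify as the main obstacle and then leave open; neither sketch is a proof. The triangular scheme starts from a decomposition of a real $(1,1)$-form into harmonic, $d$-exact and $d^*$-exact pieces each of type $(1,1)$, but in the Hodge decomposition $\mu=\mu_h+dd^*G\mu+d^*dG\mu$ the two non-harmonic summands carry cancelling $(2,0)+(0,2)$ components ($\dbar\partial^*G\mu$ and $\partial^*\dbar G\mu=-\dbar\partial^*G\mu$), so they are not separately of type $(1,1)$; and the residual equation $c\,d^*(P\mu_e)=\beta'$ on the exact part is a non-elliptic first-order equation (it annihilates $d$ of exact $1$-forms), for which surjectivity of the symbol onto $\operatorname{Im}\sigma_{d^*}(\xi)$ together with a dimension count does not identify the cokernel --- that is precisely the statement to be proved. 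Your second sketch, reducing via closedness to $\operatorname{Ker}\operatorname{\mathbf{T}}^*\cap\bigl(\operatorname{Im}(dd^c)\oplus\operatorname{Im}(d^*)\bigr)=0$, is the right reduction, but the content is then to show that $\tfrac12(dd^c)^*\alpha+cP\bigl((d\beta)^{1,1}\bigr)=0$ with $\alpha=dd^c\nu$ and $\beta=d^*\tau$ forces $\alpha=\beta=0$, an integration-by-parts argument comparable in difficulty to the kernel computation which you announce but do not carry out. Since both the closedness \emph{and} the explicit description of $\operatorname{Im}\operatorname{\mathbf{T}}$ are used downstream (to define $\mathcal{W}_2$, to get $\Pi_{\operatorname{\mathbf{T}}}\circ(\operatorname{\mathbf{L}}_2^\epsilon\oplus\operatorname{\mathbf{L}}_3)=\operatorname{\mathbf{L}}_2^\epsilon\oplus\operatorname{\mathbf{L}}_3$ under $c_2(E)=c_2(X)$, and in the surjectivity of $\delta_0\operatorname{\mathbf{M}}^0$), this half of the proposition cannot be left at the level of a plan.
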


Consider the closed subspace $0 \oplus \operatorname{Ker} \textbf{T} \oplus \operatorname{Ker} \Delta_0$ of $\mathcal{V}_1$ given by the direct sum of the Kernels of the operators in \eqref{eq:Laplacianssobolev} and \eqref{eq:operatorT}. We denote its orthogonal complement by $(0 \oplus \operatorname{Ker} \textbf{T}\oplus \operatorname{Ker}\Delta_0)^\perp$ and consider
\begin{equation}\label{eq:translation}
\textbf{t}_{\omega_0}\colon \mathcal{V}_1 \to \mathcal{V}_1,
\end{equation}
the translation operator defined by $v \mapsto v + (0,\omega_0,0)$. Consider also the closed affine subspace $\textbf{t}_{\omega_0}((0 \oplus \operatorname{Ker} \textbf{T}\oplus \operatorname{Ker}\Delta_0)^\perp) \subset \mathcal{V}_1$ and define
$$
\mathcal{W}_1 :=  \mathcal{U} \cap \textbf{t}_{\omega_0}((0 \oplus \operatorname{Ker} \textbf{T}\oplus \operatorname{Ker}\Delta_0)^\perp).
$$
Trivially, $(0,\omega_0,0) \in \mathcal{W}_1$. By ellipticity of $\Delta_A$ and $\Delta_0$ in \eqref{eq:Laplacianssobolev} we have closed subspaces
$$
\operatorname{Im} \Delta_A \subset \Omega^6_\mathbb{R}(\operatorname{ad}_0E)_{L^2_{k-2}} \qquad \operatorname{Im}\Delta_0\subset \Omega^6_\mathbb{R}(\operatorname{ad}TX)_{L^2_{k-2}},
$$
where we identify the codomains in \eqref{eq:Laplacianssobolev} with $\Omega^6_\mathbb{R}(\operatorname{ad}_0E)_{L^2_{k-2}}$  and $\Omega^6_\mathbb{R}(\operatorname{ad}TX)_{L^2_{k-2}}$ via multiplication by the volume form $\frac{\omega^3_0}{3!}$. Hence, by Proposition~\ref{prop:LiYau} we have a closed subspace of $\mathcal{V}_2$ given by
$$
\mathcal{W}_2 := \operatorname{Im} \Delta_A \oplus \operatorname{Im} \textbf{T} \oplus \operatorname{Im} \Delta_0 \subset \mathcal{V}_2.
$$
Let $\Pi_{\mathcal{W}_2}\colon \mathcal{V}_2 \to \mathcal{W}_2$ be the associated orthogonal projection. The map we are going to use in the perturbative argument is
\begin{equation}\label{eq:Mepsilon}
\textbf{M}^\epsilon\colon \mathcal{W}_1 \to \mathcal{W}_2, \quad \textrm{defined by} \quad \textbf{M}^\epsilon:=\Pi_{\mathcal{W}_2} \circ \textbf{L}^\epsilon_{|\mathcal{W}_1},
\end{equation}
where $\circ$ denotes composition and $\textbf{L}^\epsilon_{|\mathcal{W}_1}$ is the restriction of \eqref{eq:Lepsilonsobolev} to $\mathcal{W}_1$. We find now a more convenient expression for \eqref{eq:Mepsilon}. Consider the orthogonal projections
\begin{equation}\label{eq:projectorsTdelta0}
\Pi_{\textbf{T}}\colon \Omega_{\mathbb{R}}^{2,2}(X)_{L^2_{k-2}}\oplus\Omega_{\mathbb{R}}^1(X)_{L^2_{k-1}} \to \operatorname{Im}\textbf{T},
\end{equation}
\begin{equation}\label{eq:PiKer}
\Pi_{\Delta_0}\colon \Omega^6_\mathbb{R}(\operatorname{ad}TX)_{L^2_{k-2}} \to \operatorname{Im}\Delta_0.
\end{equation}
Since $\Delta_A$ induces an isomorphism in \eqref{eq:Laplacianssobolev} we have
\begin{equation}\label{eq:Mepsilonbis}
\textbf{M}^\epsilon = \textbf{L}_1\oplus \Pi_{\textbf{T}} \circ (\textbf{L}_2^\epsilon\oplus\textbf{L}_3) \oplus \Pi_{\Delta_0} \circ \textbf{L}_4.
\end{equation}
Further, if the condition $c_2(E) = c_2(X)$ is satisfied then $\operatorname{Im}\textbf{L}_2^\epsilon \subset \operatorname{Im}(dd^c)$ and so we have $\Pi_{\textbf{T}}\circ ( \textbf{L}_2^\epsilon\oplus\textbf{L}_3) = \textbf{L}_2^\epsilon\oplus\textbf{L}_3$. The choice of $\textbf{M}^\epsilon$ is now justified by the next result, which is independent of the mentioned topological condition.
\begin{prop}\label{prop:deltaL0bis}
The differential $\delta_0\textbf{M}^0$ of \eqref{eq:Mepsilon} at $0 \in \mathcal{W}_1$ is an isomorphism.
\end{prop}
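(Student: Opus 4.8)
The plan is to compute $\delta_0\textbf{M}^0$ explicitly from Lemma~\ref{lemma:deltaL0}, the change of variable $\delta h = ib$, and~\eqref{eq:Laplacian0}, and then to exploit a block-triangular structure that decouples the three Laplace-type operators $\Delta_A$, $\textbf{T}$ and $\Delta_0$. The tangent space at $0 \in \mathcal{W}_1$ is
$$
T_0\mathcal{W}_1 = \Omega^0_\mathbb{R}(\operatorname{ad}_0E)_{L^2_k} \oplus (\operatorname{Ker}\textbf{T})^\perp \oplus (\operatorname{Ker}\Delta_0)^\perp,
$$
whose elements I denote $(b,\mu,a)$. Since $\textbf{T}(\mu) \in \operatorname{Im}\textbf{T}$ by the very definition of $\textbf{T}$, the projection $\Pi_{\textbf{T}}$ acts as the identity on the middle slot, so
$$
\delta_0\textbf{M}^0(b,\mu,a) = \left(\Delta_A(b)\tfrac{\omega_0^3}{3!} + 2F_{h_0}\wedge\mu\wedge\omega_0,\; \textbf{T}(\mu),\; \Pi_{\Delta_0}\delta_0\textbf{L}_4(\mu,a)\right).
$$

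The decisive observation is that this map is block lower triangular. Ordering the inputs as $(\mu,b,a)$ and the outputs correspondingly, the second component $\textbf{T}(\mu)$ depends only on $\mu$; the first depends on $(b,\mu)$ with principal part $\Delta_A(b)$; and the third depends on $(\mu,a)$ with principal part the $a$-term $\tfrac{1}{3}\Delta_0(a)\omega_0^3$ of~\eqref{eq:Laplacian0}. Schematically the operator has the shape
$$
\begin{pmatrix} \textbf{T} & 0 & 0 \\ * & \Delta_A & 0 \\ * & 0 & \Delta_0 \end{pmatrix},
$$
the stars being the zeroth-order couplings $2F_{h_0}\wedge\mu\wedge\omega_0$ and $2R_{\nabla_0}\wedge\mu\wedge\omega_0$ together with the algebraic term $-\tfrac{i}{2}\Delta_0 h_\mu$. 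Since a bounded block lower triangular operator with isomorphisms on the diagonal is itself an isomorphism, it suffices to verify the three diagonal blocks.

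For these I would argue as follows. The block $\textbf{T}\colon (\operatorname{Ker}\textbf{T})^\perp \to \operatorname{Im}\textbf{T}$ is an isomorphism by Proposition~\ref{prop:LiYau}: it is injective on the orthogonal complement of its kernel and has closed range, so the bounded inverse theorem applies. The block $b \mapsto \Delta_A(b)\tfrac{\omega_0^3}{3!}$ is an isomorphism onto $\operatorname{Im}\Delta_A$ because $E$ is stable, whence $\operatorname{Ker}\Delta_A = 0$ and, by ellipticity, $\operatorname{Im}\Delta_A$ is the full target $\Omega^6_\mathbb{R}(\operatorname{ad}_0E)_{L^2_{k-2}}$; this is also why no projection is needed on the first slot. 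Finally, $a \mapsto \Pi_{\Delta_0}(\tfrac{1}{3}\Delta_0(a)\omega_0^3)$, restricted to $(\operatorname{Ker}\Delta_0)^\perp$, is an isomorphism onto $\operatorname{Im}\Delta_0$ by ellipticity and self-adjointness of $\Delta_0$, noting that $\tfrac{1}{3}\Delta_0(a)\omega_0^3$ already lies in $\operatorname{Im}\Delta_0$ so that $\Pi_{\Delta_0}$ acts trivially on it. Concretely, injectivity follows by reading the components top to bottom ($\textbf{T}(\mu)=0 \Rightarrow \mu = 0$, then $\Delta_A b = 0 \Rightarrow b = 0$, then $\Delta_0 a = 0 \Rightarrow a = 0$, using in each case that the variable lies in the complement of the relevant kernel), and surjectivity by inverting $\textbf{T}$ first and then solving the two now-decoupled elliptic equations for $b$ and $a$.

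The main subtlety, and the reason a plain ``block-diagonal plus compact'' argument will not suffice, is the coupling term $-\tfrac{i}{2}\Delta_0 h_\mu$ in the third component of~\eqref{eq:Laplacian0}. As $h_\mu$ is the pointwise-algebraic endomorphism determined by $\mu$, this term is \emph{second order} in $\mu$, hence of the same order as the principal part $\textbf{T}(\mu)$ and not a compact perturbation. The triangular structure circumvents this: once $\mu$ is pinned down by the middle equation $\textbf{T}(\mu) = t_2$, the whole expression $\tfrac{1}{3}\Delta_0(-\tfrac{i}{2}h_\mu)\omega_0^3 + 2R_{\nabla_0}\wedge\mu\wedge\omega_0$ becomes a fixed inhomogeneity, which after $\Pi_{\Delta_0}$ lies in $\operatorname{Im}\Delta_0$; one then solves $\tfrac{1}{3}\Delta_0(a)\omega_0^3 = t_3 - \Pi_{\Delta_0}(\cdots)$ for the unknown $a \in (\operatorname{Ker}\Delta_0)^\perp$. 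I would therefore be careful to solve in the order $\mu \to b \to a$, so that the second-order $\mu$-coupling never interferes with invertibility of the $a$-block.
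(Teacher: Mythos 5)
Your proof is correct and follows essentially the same route as the paper: there too the argument reduces to the invertibility of $\Delta_A$ (from stability of $E$), of $\textbf{T}$ on $(\operatorname{Ker}\textbf{T})^\perp$ (Proposition~\ref{prop:LiYau}), and of $\Delta_0$ on $(\operatorname{Ker}\Delta_0)^\perp$, with surjectivity deduced from the surjectivity of the two overlapping pairs $(\delta_0\textbf{L}_1,\textbf{T})$ and $(\textbf{T},\Pi_{\Delta_0}\circ\delta_0\textbf{L}_4)$ and injectivity by reading the components in the order $\mu \to b \to a$, exactly as you do. Your explicit lower-triangular packaging, together with the observation that the second-order coupling $\Delta_0 h_\mu$ is not a compact perturbation and is neutralized only by the triangular structure, is just a slightly more transparent way of organizing the same argument.
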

\begin{proof}
First, we prove that $\delta_0\textbf{M}^0\colon (0 \oplus \operatorname{Ker} \textbf{T}\oplus \operatorname{Ker}\Delta_0)^\perp \to \mathcal{W}_2$ is surjective. For this, note that by \eqref{eq:Mepsilonbis} we have
\begin{equation}\label{eq:deltaM00}
\delta_0\textbf{M}^0 = \delta_0\textbf{L}_1 \oplus \textbf{T} \oplus \Pi_{\Delta_0} \circ \delta_0\textbf{L}_4.
\end{equation}
Moreover, it follows from Lemma~\ref{lemma:deltaL0} and formula \eqref{eq:Laplacian0} that
\begin{equation}\label{eq:deltaL1L4}
\begin{split}
\delta_0\textbf{L}_1(b,\mu) & = \Delta_A(b) \frac{\omega_0^3}{3!} + 2 F_{h_0}\wedge \mu \wedge \omega_0,\\
\Pi_{\Delta_0} \circ\delta_0\textbf{L}_4(\mu,a) & = \frac{1}{3}\Delta_{0}(a - \frac{ih_\mu}{2})\omega_0^3 + \Pi_{\Delta_0}(2R_{\nabla_0}\wedge\mu\wedge \omega_0),
\end{split}
\end{equation}
for any $(b,\mu) \in \Omega_\mathbb{R}^0(\operatorname{ad}_0E)_{L^2_k}\oplus\Omega_{\mathbb{R}}^{1,1}(X)_{L^2_{k}}$ and $(\mu,a) \in \Omega_{\mathbb{R}}^{1,1}(X)_{L^2_{k}}\oplus \Omega_\mathbb{R}^0(\operatorname{ad}TX)_{L^2_k}$. Hence, by~\eqref{eq:deltaL1L4} and stability of $E$ we have that
\begin{equation}\label{eq:deltaL1T}
\delta_0 \textbf{L}_1 \oplus \textbf{T}\colon (0 \oplus \operatorname{Ker} \textbf{T})^\perp\to \operatorname{Im} \Delta_A \oplus \operatorname{Im} \textbf{T},
\end{equation}
is surjective, where $(0 \oplus \operatorname{Ker} \textbf{T})^\perp \subset \Omega_\mathbb{R}^0(\operatorname{ad}_0E)_{L^2_{k}}\oplus \Omega_{\mathbb{R}}^{1,1}(X)_{L^2_{k}}$. Similarly, it follows from~\eqref{eq:deltaL1L4} that
\begin{equation}\label{eq:deltaTL4}
\textbf{T} \oplus \Pi_{\Delta_0} \circ \delta_0\textbf{L}_4\colon (\operatorname{Ker} \textbf{T}\oplus \operatorname{Ker} \Delta_0)^\perp\to \operatorname{Im} \textbf{T} \oplus \operatorname{Im} \Delta_0
\end{equation}
is surjective, where $(\operatorname{Ker} \textbf{T}\oplus \operatorname{Ker} \Delta_0)^\perp \subset \Omega_{\mathbb{R}}^{1,1}(X)_{L^2_{k}}\oplus \Omega_\mathbb{R}^0(\operatorname{ad}TX)_{L^2_k}$. The surjectivity of $\delta_0\textbf{M}^0$ is a direct consequence of the surjectivity of \eqref{eq:deltaL1T} and \eqref{eq:deltaTL4}. To prove the injectivity, let
$$
(b,\mu,a)\in(0 \oplus \operatorname{Ker}\textbf{T} \oplus \operatorname{Ker}\Delta_0)^\perp
$$
such that $\delta_0\textbf{M}^0(b,\mu,a) = 0$. Then, by \eqref{eq:deltaM00} we have $\textbf{T}(\mu) = 0$, and so $\mu = 0$, and it follows from \eqref{eq:deltaL1L4} that
$\Delta_A(b) = 0$ and $\Delta_0(a) = 0$.
Finally, $b= 0$ by stability of $E$ and $a = 0$ since $a \in \operatorname{Ker} \Delta_0^\perp$, which proves the claim.
\end{proof}

Before we go into the proof of the main theorem we prove a technical lemma concerning the regularity of the solutions of $\textbf{M}^\epsilon = 0$. We assume $c_2(E) = c_2(X)$, so that $\Pi_{\textbf{T}}\circ ( \textbf{L}_2^\epsilon\oplus\textbf{L}_3) = \textbf{L}_2^\epsilon\oplus\textbf{L}_3$.

\begin{lemma}\label{lemma:regularity}
If $k > 5$ and $\epsilon > 0$ is small enough, then if $h$, $\omega$ and $a$ satisfy $\textbf{M}^\epsilon(h,\omega,a) = 0$ they are of class $C^{\infty}$.
\end{lemma}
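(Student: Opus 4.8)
The plan is to prove smoothness by elliptic bootstrapping, viewing $\textbf{M}^\epsilon$ as a single nonlinear elliptic operator in the triple $u = (h,\omega,a)$ and showing that a solution in $L^2_k$ automatically gains one derivative, hence lies in $L^2_j$ for every $j$ and so is $C^\infty$ by Sobolev embedding. First I would unpack the equation: since the hypothesis $c_2(E) = c_2(X)$ yields $\Pi_{\textbf{T}}\circ(\textbf{L}_2^\epsilon\oplus\textbf{L}_3) = \textbf{L}_2^\epsilon\oplus\textbf{L}_3$, the identity \eqref{eq:Mepsilonbis} shows that $\textbf{M}^\epsilon(h,\omega,a) = 0$ is equivalent to the four equations $\textbf{L}_1 = 0$, $\textbf{L}_2^\epsilon = 0$, $\textbf{L}_3 = 0$ and $\Pi_{\Delta_0}\textbf{L}_4 = 0$. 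The last one says that $\textbf{L}_4(\omega,a)$ is $L^2$-orthogonal to $\operatorname{Im}\Delta_0$, i.e. $\textbf{L}_4(\omega,a) = \kappa\,\frac{\omega_0^3}{3!}$ with $\kappa\in\operatorname{Ker}\Delta_0$; as $\operatorname{Ker}\Delta_0$ is finite dimensional and consists of covariant constant, hence smooth, endomorphisms, this right-hand side is smooth and plays no role in the regularity discussion.

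The key point is that the linearization of $\textbf{M}^\epsilon$ at the solution is an overdetermined elliptic operator, i.e. has injective principal symbol. At the canonical solution with $\epsilon = 0$ this follows from \eqref{eq:deltaL1L4}: the principal symbol of $\delta_0\textbf{L}_1$ in the $E$-variable is the Laplacian $\Delta_A$, that of $\Pi_{\Delta_0}\circ\delta_0\textbf{L}_4$ in the $TX$-variable is $\Delta_0$, and that of $\textbf{T} = \delta_0\textbf{L}_2^0\oplus\delta_0\textbf{L}_3$ is injective on real $(1,1)$-forms: a short symbol computation shows that $\xi^{1,0}\wedge\xi^{0,1}\wedge\mu = 0$ together with $\iota_{\xi^\sharp}(2\mu - (\mu,\omega_0)\omega_0) = 0$ forces $\mu = 0$, which is the infinitesimal counterpart of Proposition~\ref{prop:LiYau}. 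Hence $\delta_0\textbf{M}^0$ has injective symbol, refining the isomorphism statement of Proposition~\ref{prop:deltaL0bis}. For the solutions we are interested in, which lie in a fixed small neighbourhood of the canonical solution in $\mathcal{W}_1$, the curvatures $R$ and $F$ are uniformly bounded, so the only modification of the symbol at $(h,\omega,a)$ for $\epsilon>0$ comes from the fully nonlinear term $\epsilon(\tr R\wedge R - \tr F\wedge F)$ and is of size $O(\epsilon)$; since injectivity of the symbol is an open condition on the compact cosphere bundle, the linearization $\delta_{(h,\omega,a)}\textbf{M}^\epsilon$ remains overdetermined elliptic once $\epsilon$ is small enough. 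This is precisely where the smallness of $\epsilon$ enters.

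Given ellipticity, the bootstrap is standard. I would differentiate the equation $\textbf{M}^\epsilon(u) = 0$ by a covariant derivative to obtain a \emph{linear} elliptic system $\delta_u\textbf{M}^\epsilon(\nabla u) = g$ for $\nabla u$, whose coefficients and inhomogeneity $g$ are polynomial expressions in $u$ and its derivatives of order at most two. Since $k > 5$, the space $L^2_{k-2}$ is a Banach algebra, so all these coefficients lie in $L^2_{k-2}$ and their products are controlled; the elliptic estimate for the injective-symbol operator $\delta_u\textbf{M}^\epsilon$, together with the Sobolev multiplication theorem, then upgrades $u$ from $L^2_k$ to $L^2_{k+1}$, and inductively to $L^2_j$ for every $j$. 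By the Sobolev embedding theorem $u$ is then of class $C^\infty$.

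The hard part is the fully nonlinear anomaly term $\epsilon(\tr R\wedge R - \tr F\wedge F)$. Because $R$ and $F$ already contain second derivatives of $\omega$, $a$ and $h$, this term is \emph{quadratic in the top-order derivatives} and cannot be treated as a lower-order inhomogeneity in $\textbf{L}_2^\epsilon$, as doing so would yield no gain of regularity. It must instead be kept inside the perturbed leading symbol, which is exactly why both hypotheses are needed: $\epsilon$ small to preserve ellipticity of the full linearized system, and $k > 5$ to place the quadratic curvature terms and the variable coefficients in the Banach algebra $L^2_{k-2}$, so that differentiating the equation and bootstrapping closes.
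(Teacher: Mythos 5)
Your strategy---viewing $\textbf{M}^\epsilon$ as a single overdetermined (Douglis--Nirenberg) elliptic system in the triple $(h,\omega,a)$ and bootstrapping all unknowns simultaneously---is genuinely different from the paper's proof, and its central observation is correct: the joint principal symbol of $dd^c$ and $\mu \mapsto d^*(2\mu-(\mu,\omega_0)\omega_0)$ is injective on real $(1,1)$-forms (a linearized shadow of Proposition~\ref{prop:LiYau}), and the remaining blocks of $\delta_0\textbf{M}^0$ sit triangularly over $\Delta_A$ and $\Delta_0$. The paper instead runs a \emph{sequential} bootstrap that never needs ellipticity of the coupled system: from $\textbf{L}_1=0$ it upgrades $h$ to $C^{m+1,\alpha}$ by Schauder theory for the quasilinear Hermite--Einstein equation; from $\Pi_{\Delta_0}\circ\textbf{L}_4=0$ (whose right-hand side lies in the smooth, finite-dimensional space $\operatorname{Ker}\Delta_0\cdot\frac{\omega_0^3}{3!}$) it upgrades $e^{2ia}h_\omega$; only then does it treat $\omega$, splitting $\omega=\omega_0+\sigma+\partial\dbar\phi+\mu_H$ by the $\partial\dbar$-Hodge decomposition, recovering $\sigma$ from $\textbf{L}_2^\epsilon=0$ and---the step for which your scheme has no counterpart---recovering $\phi$ from $\textbf{L}_3=0$ read as a Monge--Amp\`ere-type equation for $\|\Omega\|_\omega$. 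This ordering also shows that your closing claim is too strong: because $h$ and $e^{2ia}h_\omega$ are upgraded to $C^{m+1,\alpha}$ \emph{before} $\omega$ is touched, $\tr R\wedge R-\tr F\wedge F$ is already of class $C^{m-1,\alpha}$ and the anomaly term \emph{can} be treated as a lower-order inhomogeneity; the staircase closes without keeping it in the symbol.

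As written, your argument has genuine gaps. First, ellipticity of the full linearization at the actual solution is asserted rather than verified: the system has mixed orders ($\textbf{L}_3$ is first order, the others second order) and genuine second-order cross-couplings ($\delta\textbf{L}_4$ contains $\dbar_0\partial_0 h_\mu\wedge\omega_0^2$, second order in $\mu$, and $\delta\textbf{L}_2^\epsilon$ is second order in all three variables once $\epsilon\neq 0$), so one must exhibit the full Douglis--Nirenberg symbol matrix, not just its diagonal blocks, and then invoke interior a priori estimates for mixed-order injective-symbol systems with $C^{0,\alpha}$ (or $L^2_{k-2}$) coefficients; such estimates exist but are not off-the-shelf, and the proposal supplies neither the verification nor a reference. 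Second, the $O(\epsilon)$ perturbation of the symbol is controlled only if $R$ and $F$ are uniformly bounded along the solution, i.e.\ your argument silently restricts to solutions in a fixed neighbourhood of the canonical one --- a restriction not present in the statement of the lemma (the paper's own proof shares this blemish when it asserts that $\omega'$ is non-degenerate for small $\epsilon$, so this is not a defect of your route alone). In short, the proposal is a plausible alternative programme rather than a complete proof; the decisive missing ingredient relative to the paper is a worked-out substitute for the Hodge-decomposition-plus-Monge--Amp\`ere step that recovers the $\partial\dbar$-potential part of $\omega$.
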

\begin{proof}
As $k > 5$, we have that $h$, $\omega$ and $a$ are of class $C^{2,\alpha}$ for any $0 < \alpha < 1$. We argue by induction on $m$. For this, assume $h$, $\omega$ and $a$ are of class $C^{m,\alpha}$. Then, by \eqref{eq:Mepsilonbis}, $h$ and $a$ satisfy
\begin{equation}\label{eq:lemmaregular1}
\textbf{L}_1(h,\omega) = 0 \qquad \Pi_{\Delta_0}\circ\textbf{L}_4(\omega,a) = 0.
\end{equation}
In holomorphic coordinates for the bundle $E$, the first equation implies the equality
\begin{equation}\label{eq:lemmaregular2}
\dbar\partial h\wedge\omega^2 = - h \; \dbar(h^{-1}) \wedge \partial h\wedge\omega^2,
\end{equation}
and since the LHS defines a linear elliptic operator of order two with $C^{m,\alpha}$ coefficients $h$ is of class $C^{m+1,\alpha}$ (see \cite[Theorem~3.55]{Aubin}). We claim that $e^{2ia}h_\omega$ is also of class $C^{m+1,\alpha}$. To see this, note that $R_{h_\omega,\dbar_a} = e^{ia}R_{e^{2ia}h_\omega,\dbar_0}e^{-ia}$ and that the second equation in \eqref{eq:lemmaregular1} is equivalent to
\begin{equation}\label{eq:lemmaregular3}
\textbf{L}_4(\omega,a) = (\operatorname{Id} - \Pi_{\Delta_0})\textbf{L}_4(\omega,a).
\end{equation}
Therefore, the claim follows, similarly as for $h$, by taking local holomorphic coordinates for $\dbar_0$ on $TX$. Note here that $(\operatorname{Id} - \Pi_{\Delta_0})$ equals the orthogonal projection on the space
\begin{equation}\label{eq:lemmaregular2}
\operatorname{Ker}\Delta_0 \frac{\omega_0^3}{3!} \subset \Omega^6_\mathbb{R}(\operatorname{ad}TX)_{L^2_{k-2}},
\end{equation}
whose elements are of class $C^\infty$ by elliptic regularity of $\Delta_0$. Hence, the proof reduces to prove that $\omega$ is of class $C^{m+1,\alpha}$. For this, note first that from $\textbf{L}_2^\epsilon(h,\omega,a) = 0$ one can easily obtain now that $\partial \dbar\omega$ is of class $C^{m-1,\alpha}$, as $h$ and $e^{2ia}h_\omega$ are of class $C^{m+1,\alpha}$. Write now $\omega = \omega_0 + \mu$, for a $(1,1)$-form $\mu$ on $X$. By the Hodge decomposition for the $\partial\dbar$-operator with respect to $\omega_0$ we have
$$
\mu = \sigma + \partial\dbar \phi + \mu_H,
$$
where $\mu_H$ is $\partial\dbar$-harmonic, $\sigma = (\partial\dbar)^*\partial\dbar\psi$ and $\phi = (\partial\dbar)^*\psi$ for some $(1,1)$-form $\psi$ of class $C^{m+4,\alpha}$. Hence, as $\partial\dbar\omega = \partial\dbar\sigma$ and $\Delta_{\partial\dbar,\omega_0}\sigma = (\partial\dbar)^*\partial\dbar \sigma$ we conclude that $\sigma$ is of class $C^{m+1,\alpha}$. Finally, from $\textbf{L}_3(\omega) = 0$ it follows easily that $\|\Omega\|_\omega$ is of class $C^{m+1,\alpha}$ since
$$
d(\|\Omega\|_\omega)\wedge \omega^2 = - \|\Omega\|_\omega 2d\omega'\wedge\omega,
$$
where $\omega' = \omega_0 + \sigma + \mu_H$ is of class $C^{m+1,\alpha}$. Since $\epsilon$ is small, $\omega'$ is non-degenerate and hence $\phi$ is of class $C^{m+3,\alpha}$ by regularity of the Monge-Amp\`ere equation (see \cite[Lemma 17.16]{GiTru}). This concludes the proof.
\end{proof}

We are now prepared to prove Theorem~\ref{thm:existence0}. Recall from \S\ref{sec:methodresult} that $(h_0,\omega_0,\dbar_0)$ is the canonical solution of the $\epsilon$-system with $\epsilon = 0$, identified with the large radius limit $\lambda \to \infty$ of the Strominger system and \eqref{Rinst}.

\begin{proof}
By Proposition~\ref{prop:deltaL0bis}, $\textbf{M}^\epsilon\colon\mathcal{W}_1\to\mathcal{W}_2$ defines a $C^1$ map whose Fr\'echet derivative at $(0,\omega_0,0)$ with $\epsilon = 0$ is an isomorphism. Applying the implicit function theorem (see e.g.~\cite[Theorem~3.10]{Aubin}), there exists $\epsilon_0 > 0$ and a $C^1$ curve
$$
]-\epsilon_0,\epsilon_0[ \to \mathcal{W}_1\colon \epsilon \mapsto (b_\epsilon,\omega_\epsilon,a_\epsilon)
$$
such that $\textbf{M}^\epsilon(b_\epsilon,\omega_\epsilon,a_\epsilon) = 0$ for all $\epsilon \in ]-\epsilon_0,\epsilon_0[$. Since $c_2(E) = c_2(X)$, by \eqref{eq:Mepsilonbis} we have
\begin{equation}\label{eq:thmidentities}
\textbf{L}_1\oplus \textbf{L}_2^\epsilon\oplus\textbf{L}_3\oplus \Pi_{\Delta_0}\circ\textbf{L}_4(h_\epsilon,\omega_\epsilon,a_\epsilon) = 0,
\end{equation}
and by Lemma~\ref{lemma:regularity} the three elements $b_\epsilon$, $\omega_\epsilon$ and $a_\epsilon$ are of class $C^\infty$. Setting now
$
\lambda := \frac{\alpha'}{\epsilon},
$
we obtain a $C^1$ curve $]\lambda_0,+\infty[ \to \lambda \to (h_\lambda,\omega_\lambda,\dbar_\lambda):= (h_{b_\epsilon},\lambda\omega_\epsilon,\dbar_{a_\epsilon})$ of solutions of the Strominger system which satisfies the first part of the statement. For the second, suppose that $(X,\omega_0)$ has holonomy equal to $SU(3)$. Then, $\operatorname{Ker} \Delta_0 = i \mathbb{R}\operatorname{Id}$ and we have the orthogonal projector
$$
\Pi:= (\operatorname{Id} - \Pi_{\Delta_0})\colon \Omega^6_\mathbb{R}(\operatorname{ad}TX)_{L^2_{k-2}} \to i\mathbb{R}\operatorname{Id}\frac{\omega_0^3}{3!}, a \to \frac{\langle a,i\operatorname{Id}\frac{\omega_0^3}{3!}\rangle_{L^2}}{\|i\operatorname{Id}\frac{\omega_0^3}{3!}\|^2_{L^2}}i\operatorname{Id}\frac{\omega_0^3}{3!}.
$$
Note that $\langle a,i\operatorname{Id}\frac{\omega_0^3}{3!}\rangle_{L^2} = \int_X i\operatorname{tr}a$ and $\|i\operatorname{Id}\frac{\omega_0^3}{3!}\|^2_{L^2} = r\operatorname{Vol}(\omega_0)$, where $r$ is the rank of $E$, so explicitly we have
\begin{equation}\label{eq:projection}
\Pi \circ \textbf{L}_4(\omega,a) = \frac{\int_X \operatorname{tr}i R_{\omega,\dbar_a}\wedge\omega^2}{r\operatorname{Vol}(\omega_0)}i\operatorname{Id}\frac{\omega_0^3}{3!},
\end{equation}
which may vary with $(\omega,a)$ since $\omega$ is not necessarily closed. Given $\epsilon \in ]-\epsilon_0,\epsilon_0[$, by~\eqref{eq:thmidentities} we have $d(\|\Omega\|_{\omega_\epsilon}\omega^2_\epsilon) = 0$ and $\Pi_{\Delta_0}\circ\textbf{L}_4(\omega_\epsilon,a_\epsilon) = 0$, which is equivalent to
\begin{equation}\label{eq:lemmaregular}
\textbf{L}_4(\omega_\epsilon,a_\epsilon) = \Pi \circ \textbf{L}_4(\omega_\epsilon,a_\epsilon).
\end{equation}
We multiply by $i\|\Omega\|_{\omega_\epsilon}$ on both sides of \eqref{eq:lemmaregular}, take the trace and integrate over $X$, therefore obtaining
\begin{align*}
0 = c_1(X) \cdot [\|\Omega\|_{\omega_\epsilon}\omega^2_\epsilon]& = \int_X \|\Omega\|_{\omega_\epsilon} i \operatorname{tr} \Pi \circ \textbf{L}_4(\omega_\epsilon,a_\epsilon)\\
& = - \frac{\int_X \operatorname{tr}i R_{\omega_\epsilon,\dbar_{a_\epsilon}}\wedge\omega^2_\epsilon}{\operatorname{Vol}(\omega_0)}\int_X \|\Omega\|_{\omega_\epsilon}\frac{\omega_0^3}{3!},
\end{align*}
where $\cdot$ denotes intersection product in cohomology. Here we have used $c_1(X) = 0$ and formula~\eqref{eq:projection}. Hence, since $\|\Omega\|_{\omega_\epsilon}$ is everywhere positive, we have
$$
\int_X \operatorname{tr} i R_{\omega_\epsilon,\dbar_{a_\epsilon}}\wedge\omega^2_\epsilon=0
$$
and so $\textbf{L}_4(\omega_\epsilon,a_\epsilon) = \Pi\circ\textbf{L}_4(\omega_\epsilon,a_\epsilon) = 0$ which proves the statement.
\end{proof}

\section{Examples}
In this section we give some known examples of stable vector bundles on Calabi-Yau threefolds satisfying
the hypothesis of Theorem~\ref{thm:existence0}.
\vskip .5cm
Let us note that there are presently no general results available concerning the geometry of moduli spaces of stable bundles (or sheaves) on Calabi-Yau threefolds which could provide solutions of the second Chern class constraint \eqref{eq:Anomallycohom0} and so give a large class of examples where Theorem~\ref{thm:existence0} applies. One way to find examples of stable bundles which satisfy the constraint is to explicitly construct them on a given Calabi-Yau threefold.

The probably most prominent threefold is the quintic Calabi-Yau threefold. A stable rank four vector bundle of vanishing first Chern class on the quintic which satisfies the Chern class constraint has been constructed in \cite{DiGr88} using the vector bundle construction of \cite{Maru78}. Another example of a stable rank four bundle on the quintic threefold which satisfies the constraint is given by a smooth deformation of tangent bundle and the trivial line bundle \cite{Huyb}. We note that the result of ~\cite{Huyb} combined with Theorem~\ref{thm:existence0} provides also a new proof of~\cite[Theorem~5.1]{LiYau} which moreover assures that (for the generic quintic) the solutions of the Strominger system satisfy also the equations of motion derived from the effective heterotic string action. An extensive search for vector bundles on the quintic Calabi-Yau threefold which satisfy the second Chern class constraint has been performed in \cite{DouZu} using the monad construction. These bundles have been further investigated in \cite{bram} proving stability of some rank three bundles.

Elliptically fibered Calabi-Yau threefolds with section have been also extensively investigated. In \cite{AnCu2} a class of stable extension bundles has been constructed which satisfy the second Chern class constraint. More generally, if the Calabi-Yau manifold admits a fibration structure, a natural procedure to describe moduli spaces of sheaves and bundles is to first construct them fiberwise and then to find an appropriate global description. This method has been successfully employed to construct stable vector bundle on elliptic fibrations (cf. Section 6) and on $K3$ fibrations. However, these bundles do not satisfy \eqref{eq:Anomallycohom0}, but only a generalized constraint (see \eqref{topfive}) which lead us to the topic of the next section.

\section{Further Directions}
\label{sec:remarks}

In more general string backgrounds (i.e., if a number of five-branes contributes to the compactification, cf. \cite{FMW97, WiStro}), the role of the integrability condition~\eqref{eq:Anomallycohom0} is played by the generalized cohomological condition
\begin{equation}\label{topfive}
c_2(X)=c_2(E)+[W],
\end{equation}
with $[W]$ an effective curve class. Three approaches to construct holomorphic $G$-bundles on elliptically fibered Calabi-Yau threefolds $X$ that are capable to satisfy \eqref{topfive} have been introduced in \cite{FMW97, FMW98, FMW99}. The parabolic bundle approach applies for any simple $G$. One considers deformations of certain minimally unstable $G$-bundles corresponding to special maximal parabolic subgroups of $G$ (cf. also \cite{And99}). The spectral cover approach applies for $SU(n)$ and $Sp(n)$ bundles and can be essentially understood as a relative Fourier-Mukai transformation. The del Pezzo surface approach applies for $E_6$, $E_7$ and $E_8$ bundles and uses the relation between subgroups of $G$ and singularities of del Pezzo surfaces.

Condition~\eqref{topfive} can be understood as an integrability condition for a generalized anomaly equation which in the context of the present paper is given by
\begin{equation}\label{fivebrane}
i\partial\dbar\omega=\alpha'(\tr R\wedge R-\tr F\wedge F-\sum \delta_5).
\end{equation}
Here the sum is taken over a union of holomorphic curves representing $W$ and $\delta_5$ can be understood as a current that integrates to one in the direction transverse to a single curve. One way to view the $\delta_5$ contribution to the anomaly equation is to consider the gauge field in~\eqref{fivebrane} as a singular limit of smooth solutions of the HYM equation which degenerate such that (a part of) the $\tr F\wedge F$ term becomes a delta function source. This limit can be understood as the analog of the small instanton limit in complex two dimensions which leads to the Uhlenbeck compactification of the underlying moduli space. Another way is to consider the fields in~\eqref{fivebrane} as smooth fields in the non-compact manifold given by the complement of a subvariety on $X$ with behavior at `infinity' fixed by the $\delta_5$ contributions. It is plausible that the implicit function theorem argument can be generalized to find solutions in this last case.

Let us note that solutions of the Strominger system and of the equations of motion describe supersymmetric configurations in the low energy field theory approximation of heterotic string theory. More generally, these equations include terms of higher order in $\alpha'$. In \cite{Strom} it has been argued, heuristically, that solutions of the low energy field theory imply solutions (to all finite orders) of the fully $\alpha'$-corrected equations. This agrees also with predictions one gets from a non-renormalization theorem for the low energy superpotential. Using a perturbative expansion of the gauge connection and the metric around a Calabi-Yau manifold with fixed complex structure~\cite{Wi86} these predictions have been checked ~\cite{WiWi87,WuWi87} (see also~\cite{Strom}) for the first few orders in the expansion in the special case of a deformation of the direct sum of the tangent bundle and the trivial line bundle. The only obstruction encountered in this procedure is the integrability condition imposed by the anomaly equation.
However, it is an open problem to prove that the expansion encounters no obstructions at any order and that exact solutions of the $\alpha'$-corrected equations exist.

Therefore it is plausible that the method applied in this paper leads also to solutions of the
$\alpha'$-corrected equations.


\begin{thebibliography}{101}

\bibitem{Adams} A.~Adams, Conformal field theory and the Reid conjecture, 0703048 [hep-th].

\bibitem{AnCu} B.~Andreas and G.~Curio, Heterotic models without fivebranes, J. Geom. Phys. {\bf 57} (2007) 2136--2145, 0611309 [hep-th].

\bibitem{AnCu2} B.~Andreas and G.~Curio, Stable bundle extensions on elliptic Calabi-Yau threefolds, J. Geom. Phys. {\bf 57} (2007) 2249--2262, 0611762 [math.AG].

\bibitem{And99}
 B.~Andreas, On vector bundles and chiral matter in {$N=1$} heterotic compactifications, J. High Energy Phys. {\bf 9812} (1998) 0--24, 9707243 [hep-th].

\bibitem{Aubin} T.~Aubin, Some Nonlinear Problems in Riemannian Geometry, Springer Monographs in Mathematics, Springer-Verlag, Berlin (1998).

\bibitem{bram} M.~C.~ Brambilla, Semistability of certain bundles on a quintic Calabi-Yau threefold, Rev. Mat. Complut. {\bf 22} (1) (2009) 53--61, 0509599 [math.AG].

\bibitem{CHSW} P. Candelas, G. Horowitz, A. Strominger, E. Witten, Vacuum Configurations for Superstrings, Nucl. Phys. B {\bf 258} (1) (1985) 46--74.

\bibitem{DiGr88} J. Distler and B. Greene, Aspects of (2,0) compactifications, Nucl. Phys. B {\bf 304} (1988) 1--62.

\bibitem{Don} S.K.~Donaldson, Anti-self-dual Yang--Mills connections on a complex algebraic surface and stable vector bundles, Proc. London Math. Soc. {\bf 50} (1985) 1--26.

\bibitem{DouZu} M.R.~Douglas and C.~Zhou, Chirality change in string theory, J. High Energy Phys. {\bf 6} (2004) 14--42.

\bibitem{FIVU} M. Fern\'andez, S. Ivanov, L. Ugarte, R. Villacampa, Non-K\"aehler heterotic-string compactifications with non-zero fluxes and constant dilaton, Commun. Math. Phys. {\bf 288} (2009) 677--697, 0804.1648 [hep-th].

\bibitem{FMW97} R.~Friedman, J.~W. Morgan, and E.~Witten, Vector bundles and {${\rm  F}$}-theory, Comm. Math. Phys. {\bf 187} (1997) 679--743, 9701162 [hep-th].

\bibitem{FMW98} \bysame, {Principal {$G$}-bundles over elliptic curves}, Math. Res. Lett. {\bf 5} (1998) 97--118, 9707004 [math.AG].

\bibitem{FMW99} \bysame, {Vector bundles over elliptic fibrations}, J. Alg. Geom.  {\bf 8} (1999) 279--401, 9709029 [math.AG].

\bibitem{Yau1} J.~Fu, J.~Li and S.-T.~Yau, Constructing balanced metrics on some families of non-K\"ahler Calabi-Yau threefolds, 0809.4748 [math.DG].

\bibitem{FuYau} J.-X. Fu and S.-T.~Yau, The theory of superstring with flux on non-K\"ahler manifolds and the
complex Monge-Amp\`ere, J. Diff. Geom. {\bf 78} (2008) 369--428, 0604063 [hep-th].

\bibitem{FTY} J.-X.~Fu, L.-S.~Tseng and S.-T.~Yau, Local heterotic torsional models, Comm. Math. Phys. {\bf 289} (2009), 1151--1169, 0806.2392 [hep-th].

\bibitem{GiTru} D. Gilbarg and N. S. Trudinger, Elliptic Partial Differential Equations of Second Order, Springer Classics in Mathematics, Springer-Verlag, Berlin (1998).

\bibitem{HillTaylor} C.-D.~Hill and M.~Taylor, Integrability of rough almost complex structures, J. Geom. Anal. {\bf 13} (2003) 165--172.

\bibitem{Hull} C.-M.~Hull, anomallies ambiguities and super strings, Phys. Lett. B {\bf 167} (1) (1986) 51--55.

\bibitem{Huyb} D.~Huybrechts, The tangent bundle of a Calabi-Yau manifold - deformations and restriction to rational curves, Comm. Math. Phys. {\bf 171} (1995) 139--158.

\bibitem{Ivan09} S.~Ivanov, Heterotic supersymmetry, anomaly cancellation and equations of motion, Phys. Lett. B {\bf 685} (2-3) (2010) 190--196, 0908.2927 [hep-th].

\bibitem{IvanovPapadopoulos} S.~Ivanov and G.~Papadopoulos, Vanishing theorems and string backgrounds, Class. Quant. Grav. {\bf 18} (2001) 1089--1110, 0010038 [math.DG].

\bibitem{LiYauHYM} J.~Li and S.-T.~Yau, Hermitian-Yang-Mills connections on non-K\"ahler manifolds, Mathematical aspects of string theory (San Diego, Calif., 1986) 560-573, Adv. Ser. Math. Phys., 1, World Sci. Publishing, Singapore (1987).

\bibitem{LiYau} J.~Li and S.-T.~Yau, The existence of supersymmetric string theory with torsion, J. Diff. Geom. {\bf 70} (2005) 143--181, 0411136 [hep-th].

\bibitem{LuebTel} M. L\"ubke and A. Teleman, The Kobayashi-Hitchin correspondence, World Scientific Publishing Co., Inc., River Edge, NJ (1995).

\bibitem{Maru78} M. Maruyama, Moduli of stable sheaves II, J. Math. Kyoto {\bf 18} (3) (1978) 557--614.

\bibitem{Reid} M.~Reid, The moduli space of 3-folds with K=0 may nevertheless be irreducible, Math. Ann. {\bf 278} (1987) 329--334.

\bibitem{Sen} A.~Sen, (2,0) supersymmetry and space-time supersymmetry in the heterotic string theory, Nucl. Phys. B {\bf 278} (2) (1986) 289--308.

\bibitem{Strom} A.~Strominger, Superstrings with torsion, Nucl. Phys. B {\bf 274} (2) (1986) 253--284.

\bibitem{UY} K.~Uhlenbeck and S.-T. Yau, On the existence of Hermitian-Yang-Mills connections on stable bundles over compact K\"{a}hler manifolds, Comm. Pure and Appl. Math. {\bf 39-S} (1986) 257--293.

\bibitem{Wi86} E.~Witten, New Issues in manifolds of $SU(3)$ holonomy, Nucl. Phys. B {\bf 268} (1986) 79--112.

\bibitem{WiStro} E.~Witten, Strong coupling expansion of Calabi-Yau compactification, Nucl. Phys. B {\bf 471} (1996) 135--158, hep-th/9602070.

\bibitem{WiWi87} E.~Witten and L.~Witten, Large radius expansion of superstring compactification, Nucl. Phys. B {\bf 281} (1987) 109--126.

\bibitem{WuWi87} X.~Wu and L.~Witten, Space-time supersymmetry in large radius expansion of superstring compactification, Nucl. Phys. B {\bf 289} (1987) 385--396.

\end{thebibliography}
\end{document}